\patchcmd{\subsection}{-.5em}{.5em}{}{}
\patchcmd{\subsubsection}{-.5em}{.5em}{}{}
\numberwithin{equation}{section}
\newcommand{\cR}{\mathcal{R}}
\newcommand{\bR}{\mathbb{R}}
\newcommand{\bT}{\mathbb{T}}
\newcommand{\bZ}{\mathbb{Z}}
\newcommand{\ra}{\rightarrow}
\newcommand{\qand}{\quad \textrm{and} \quad}
\def\acts{\curvearrowright}
\newcommand\subsetsim{\mathrel{%
\ooalign{\raise0.2ex\hbox{$\subset$}\cr\hidewidth\raise-0.8ex\hbox{\scalebox{0.9}{$\sim$}}\hidewidth\cr}}}
\newcommand{\eps}{\varepsilon}
\theoremstyle{theorem}
\newtheorem{theorem}{Theorem}[section]
\newtheorem{corollary}[theorem]{Corollary}
\newtheorem{proposition}[theorem]{Proposition}
\newtheorem{lemma}[theorem]{Lemma}
\newtheorem{scholium}[theorem]{Scholium}
\theoremstyle{definition}
\newtheorem{definition}[theorem]{Definition}
\newtheorem{remark}[theorem]{Remark}
\newtheorem{example}{Example}[section]
\tikzstyle{decision} = [diamond, draw, fill=blue!20, 
\tikzstyle{block} = [rectangle, draw, fill=blue!20, 
\tikzstyle{line} = [draw, -latex']
\tikzstyle{cloud} = [draw, ellipse,fill=red!20, node distance=3cm,
\renewcommand\labelenumi{(\roman{enumi})}
\renewcommand\theenumi\labelenumi
\DeclarePairedDelimiterX\Set[2]{\{}{\}}{#1\,\delimsize\vert\,#2}
\begin{document}
\bibliographystyle{plain} 

\title{Sets of transfer times with small densities}

\author{Michael Bj\"orklund}
\address[Michael Bj\"orklund]{Department of Mathematics, Chalmers, Gothenburg, Sweden}
\email{micbjo@chalmers.se}

\author{Alexander Fish}
\address[Alexander Fish]{School of Mathematics and Statistics F07, University of Sydney, NSW 2006, Australia}
\email{alexander.fish@sydney.edu.au}

\author{Ilya D. Shkredov}
\address[Ilya D. Shkredov]{Steklov Mathematical Institute, ul. Gubkina,
8, Moscow, Russia, 119991, and
	IITP RAS, Bolshoy Karetny per. 19, Moscow, Russia, 127994, and
	MIPT, Institutskii per. 9, Dolgoprudnii, Russia, 141701}
\email{ilya.shkredov@gmail.com}

\date{}

\begin{abstract}
We consider in this paper the set of transfer times between two measurable subsets of positive measures in an ergodic probability measure-preserving system of 
a countable abelian group. If the lower asymptotic density of the transfer times is small, then we prove this set must be either periodic 
or Sturmian. Our results can be viewed as ergodic-theoretical extensions of some classical sumset theorems  in compact abelian groups due to Kneser. 
Our proofs are based on a correspondence principle for action sets which was developed previously by the first two authors.
\end{abstract}

\keywords{Return times, inverse theorems, sumsets}

\subjclass[2010]{Primary:  37A45; Secondary: 28D05, 11B13  }
\maketitle

\section{Introduction}

Throughout this paper, we shall assume that
\vspace{0.2cm}
\begin{itemize}
\item $G$ is a countable and discrete abelian group.
\vspace{0.2cm}
\item $(X,\mu)$ is a standard probability measure space, endowed with an \emph{ergodic} probability measure-preserving action of $G$.
\vspace{0.2cm}
\item $(F_n)$ is a sequence of finite subsets of $G$ with the property that for every bounded measurable function $\varphi$ on $X$, there 
exists a $\mu$-conull subset $X_\varphi \subset X$ such that
\begin{equation}
\label{pointwise}
\lim_n \frac{1}{|F_n|} \sum_{g \in F_n} \varphi(gx) = \int_X \varphi \, d\mu, \quad \textrm{for all $x \in X_\varphi$}.
\end{equation}
\end{itemize}
\vspace{0.2cm}

If $C$ is a subset of $G$, its \emph{lower asymptotic density $\underline{d}(C)$ with respect to $(F_n)$} is given by
\begin{equation}
\label{def_lad}
\underline{d}(C) = \varliminf_n \frac{|C \cap F_n|}{|F_n|}.
\end{equation}
The \emph{set of transfer times $\cR_{A,B}$} between two measurable subsets $A$ and $B$ of $X$ is defined by
\begin{equation}
\label{def_RAB}
\cR_{A,B} = \big\{ g \in G \, \mid \, \mu(A \cap g^{-1}B) > 0 \big\}.
\end{equation}
We set $\cR_A = \cR_{A,A}$, and refer to $\cR_A$ as the \emph{set of return times to the set $A$}. \\

To briefly give a flair of the type of results that we are after in this paper, let us first consider the case when $G$ is a \emph{finite} abelian group,
$X = G$ (where $G$ acts on $X$ by translations, preserving the normalized counting measure $\mu$ on $X$) and $F_n = G$ for all $n$. 
If $A$ and $B$ are non-empty subsets of $X$, then we note that $\cR_{A,B} = BA^{-1}$, the difference set of $A$ and $B$ in $G$. A fundamental 
line of research in additive combinatorics is concerned with the structure of "small" difference sets in the group $G$; in particular, one wishes to 
understand to which extent the smallness forces the difference set $AB^{-1}$ to be a coset (or a union of "few" cosets) of a subgroup of $G$. 
For instance, it follows from the work of Kneser \cite{Kneser} that if 
\begin{equation}
\label{subgroup}
A = B \qand |AA^{-1}| < \frac{3}{2}|A|,
\end{equation}
then $AA^{-1}$ must be a subgroup of $G$, and if 
\[
|BA^{-1}| < |A| + |B|,
\]
then $BA^{-1}$ is invariant under a subgroup $G_o$ of $G$ such that
\begin{equation}
\label{passtoquotient}
|BA^{-1}| = |AG_o| + |BG_o| - |G_o|.
\end{equation}
In particular, if we denote by $C$ and $D$ the images of $A$ and $B$ under the canonical quotient map $\pi : G \ra H := G/G_o$, then 
\[
A \subset \pi^{-1}(C) \qand B \subset \pi^{-1}(D) 
\]
and
\begin{equation}
\label{id}
\frac{|BA^{-1}|}{|G|} = \frac{|(BG_o)(AG_o)^{-1}|}{|G_o||H|} = \frac{|DC^{-1}|}{|H|} = \frac{|C|}{|H|} + \frac{|D|}{|H|} - \frac{1}{|H|}.
\end{equation}
It turns out that pairs $(C,D)$ of subsets in $H$ which satisfy \eqref{id} are quite structured (we refer the reader to Chapter 3.3.2 in \cite{Ham} for a survey about results in this direction), whence the pair $(A,B)$ in $G$ is "controlled" by a "structured" pair $(C,D)$ in the quotient group $H$ in a very precise way. Our aim in this paper is to show that these phenomena extend to the ergodic-theoretical setting described above, where $\underline{d}$ plays the role of the counting measure. \\

Before we proceed to our main results, let us briefly discuss the ergodic-theoretical analogue of "control" discussed above. If $(Y,\nu)$ is another standard probability measure space, endowed with an ergodic probability measure-preserving action of $G$, then we say that $(Y,\nu)$ is a \emph{factor} of $(X,\mu)$ if there exists a measurable $G$-invariant $\mu$-conull subset $X' \subseteq X$
and a $G$-equivariant measurable map $\pi : X' \ra Y$ such that $\pi_*(\mu \mid_{X'}) = \nu$. If we wish to suppress the dependence
on the set $X'$, we shall simply write $\pi : (X,\mu) \ra (Y,\nu)$ for this map. We note that if $C$ and $D$ are measurable subsets of $Y$ 
such that 
\[
A \subset \pi^{-1}(C) \qand B \subset \pi^{-1}(D), \quad \textrm{modulo $\mu$-null sets},
\]
then $\cR_{A,B} \subseteq \cR_{C,D}$. We further say that the pair $(C,D)$ \emph{controls $(A,B)$} if $\cR_{A,B} = \cR_{C,D}$. If we wish
to emphasize the dependence on $\pi$, we say that $(C,D)$ $\pi$-controls $(A,B)$.  \\

Let us now explain the framework of this paper. Roughly speaking, we are motivated by the following vague questions: If $A$ and $B$ are measurable subsets of $X$ with positive $\mu$-measures, then
\vspace{0.2cm}
\begin{adjustwidth*}{0.5in}{0.5in}
\vspace{0.05cm}
\begin{itemize}
\item how small can the lower asymptotic density of $\cR_{A,B}$ be?
\item if $\cR_{A,B}$ is small, what can we say about structure of $\cR_{A,B}$?
\item if $\cR_{A,B}$ is small, must the pair $(A,B)$ be controlled by another pair in a "small" factor of $(X,\mu)$?
\end{itemize}  
\end{adjustwidth*}
\vspace{0.2cm}
Below we shall answer these questions for different notions of smallness (with respect to $\underline{d}$), and provide examples which show that our results are optimal
in the settings under study. 

\begin{remark}[Standing assumptions]
Let us first make a few basic observations. We note that if 
$\mu(A) + \mu(B) > 1$, then $\mu(A \cap g^{-1}B) = 1$ for all $g \in G$, whence $\cR_{A,B} = G$. If $\mu(A) + \mu(B) = 1$,
then either $\cR_{A,B} = G$ or there exists $g_o \in G$ such that $\mu(A \cap g_o^{-1}B) = 0$. In the latter case, $B = g_o^{-1}A^c$ 
modulo $\mu$-null sets, so if denote by $H$ the $\mu$-essential stabilizer of $A$, then, for every $g \in G$,
\[
\mu(A \cap g^{-1}B) = \mu(A \cap (g_o g)^{-1}A^c) = 0 \iff g_o g \in H,
\]
whence $\cR_{A,B} = G \setminus g_o^{-1} H$. We conclude that if $\mu(A) + \mu(B) = 1$, then either $\cR_{A,B}$ is $G$ or equal to the complement of
a single coset of some subgroup of $G$. Hence, to get non-trivial results, we shall henceforth always assume that
\begin{equation}
\label{standass}
\mu(A) + \mu(B) < 1.
\end{equation}
In particular, if $A = B$, we shall assume that $\mu(A) < 1/2$. 
\end{remark}

\begin{remark}[Concerning novelty]
All of the results in this paper are new already in the case when $G = (\bZ,+)$ and $F_n = \{1,\ldots,n\}$ (this sequence satisfies \eqref{pointwise} by Birkhoff's Ergodic Theorem). However, we stress that 
we in general do \emph{not} need to assume that the sequence $(F_n)$ is F\o lner (asymptotically invariant) in the group $G$. For instance, in the case of $(\bZ,+)$, our results below will also
apply to the rather sparse sequence 
\[
F_n = \{ k \sqrt{2} + k^{5/2} \, \mid \, k = 1,\ldots,n  \}, \quad \textrm{for $n \geq 1$},
\]
which is far from being a F\o lner sequence. We refer the reader to \cite{BW} for more examples of this type. 
\end{remark}

\subsection{Main results}
The main point of our first theorem is that if the lower asymptotic density of $\cR_A$ is small enough, then the set of transfer times $\cR_A$ is in fact a 
subgroup of $G$ (this is the ergodic-theoretical analogue of \eqref{subgroup} described above for \emph{finite} $G$). 
\begin{theorem}
\label{thm1}
For all measurable subsets $A$ and $B$ of $X$ with positive $\mu$-measures, we have
\[
\underline{d}(\cR_{A,B}) \geq \max(\mu(A),\mu(B)).
\]
Furthermore, suppose that $\underline{d}(\cR_A) < \frac{3}{2} \mu(A)$. Then there exists a finite-index subgroup $G_o < G$ with 
index $[G : G_o] \leq \frac{1}{\mu(A)}$, such that $\cR_A = G_o$.
\end{theorem}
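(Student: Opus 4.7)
The bound $\underline{d}(\cR_{A,B}) \geq \max(\mu(A),\mu(B))$ is purely analytic and drops out of the pointwise convergence assumption. Setting $\Phi(g) := \mu(A \cap g^{-1}B)$, Fubini gives
\begin{equation*}
\frac{1}{|F_n|}\sum_{g \in F_n}\Phi(g) = \int_X \chi_A(x)\left(\frac{1}{|F_n|}\sum_{g \in F_n}\chi_B(gx)\right) d\mu(x),
\end{equation*}
so applying \eqref{pointwise} with $\varphi = \chi_B$ together with dominated convergence shows that the right-hand side tends to $\mu(A)\mu(B)$. On the other hand $\Phi(g) \leq \min(\mu(A),\mu(B))$ and $\Phi$ vanishes off $\cR_{A,B}$, whence
\begin{equation*}
\frac{|F_n \cap \cR_{A,B}|}{|F_n|} \geq \frac{1}{\min(\mu(A),\mu(B))}\cdot \frac{1}{|F_n|}\sum_{g \in F_n}\Phi(g),
\end{equation*}
and taking $\liminf$ delivers the desired lower bound after simplification.

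For the structural statement my plan is to reduce the problem to the classical Kneser inverse theorem inside a compact abelian group via the correspondence principle for action sets mentioned in the abstract. Applied to the ergodic system $(X,\mu)$ and the subset $A$, that principle should produce a compact abelian group $K$, a homomorphism $\tau : G \to K$ with dense image, and a Borel subset $C \subset K$ of Haar measure $m_K(C) = \mu(A)$ such that
\begin{equation*}
\cR_A = \tau^{-1}(CC^{-1}) \qand \underline{d}(\cR_A) = m_K(CC^{-1}).
\end{equation*}
Combined with the hypothesis, this rewrites as $m_K(CC^{-1}) < \frac{3}{2}m_K(C)$, which is exactly the regime in which Kneser's inverse theorem, applied to the Haar measure on the compact abelian group $K$, forces $CC^{-1}$ to coincide with a compact open subgroup $H_o \leq K$.

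Setting $G_o := \tau^{-1}(H_o)$, the correspondence identity then upgrades to $\cR_A = G_o$, a subgroup of $G$. Since $\tau$ has dense image and $H_o$ is open, the induced map $G/G_o \to K/H_o$ is a bijection, so
\begin{equation*}
[G:G_o] = [K:H_o] = \frac{1}{m_K(H_o)} = \frac{1}{\underline{d}(\cR_A)} \leq \frac{1}{\mu(A)},
\end{equation*}
the last inequality coming from the first assertion of the theorem applied with $B = A$.

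\emph{Main obstacle.} The technical heart of the argument is establishing the correspondence principle with both the exact set identity $\cR_A = \tau^{-1}(CC^{-1})$ and the density identity $\underline{d}(\cR_A) = m_K(CC^{-1})$ in the generality stated --- in particular, without the F\o lner assumption on $(F_n)$, so that standard equidistribution in the Kronecker factor of $(X,\mu)$ is not directly available and one must construct $K$, $\tau$, and $C$ through a more careful factor analysis of the system. Once the correspondence is in place, the remainder of the argument is a direct application of Kneser's theorem and the elementary index computation above.
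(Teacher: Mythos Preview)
Your argument for the first assertion is correct and in fact more direct than the paper's route, which passes through action sets: the paper shows (Lemma~\ref{lemma_generic} and Corollary~\ref{cor_max}) that $\underline{d}(\cR_{A,B}) \geq \mu(A_x^{-1}B) \geq \mu(B)$ for generic $x$, whereas your Fubini-plus-dominated-convergence computation avoids the pointwise set $A_x$ entirely.

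For the second assertion there is a real gap. The correspondence principle the paper actually proves (Lemma~\ref{corrprinciple} and Proposition~\ref{mainprop}) does \emph{not} deliver the exact identities you postulate. It produces $K$, $\tau$, and a measurable $I \subset K$ satisfying only
\[
\mu(A) \leq m_K(I), \qquad m_K(II^{-1}) \leq \underline{d}(\cR_A), \qquad \cR_A \subseteq \tau^{-1}(II^{-1}),
\]
and each of these may be strict a priori (the set $I$ arises as a shadow of $A$ in the Kronecker factor and can have strictly larger measure). These inequalities still suffice to feed into Kneser and obtain a finite quotient $p:K \to Q$ in which $I$ sits inside a single coset, hence $\cR_A \subseteq G_o := \ker(p\circ\tau)$. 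But the reverse inclusion is \emph{not} a consequence of the correspondence; your plan assumes it away.

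The paper closes this gap with a separate device, the \emph{overshoot inequality} (Proposition~\ref{mainprop}): for any $g \in G_o \setminus \cR_A$ one has $\mu(A \cap g^{-1}A) = 0$, so
\[
2\mu(A) = \mu(A \cup g^{-1}A) \leq m_Q(I_o \cup \tau_p(g)^{-1}I_o) = m_Q(\{e_Q\}),
\]
while the chain above also gives $m_Q(\{e_Q\}) = m_K(II^{-1}) \leq \underline{d}(\cR_A) < \tfrac{3}{2}\mu(A)$, a contradiction. This is what forces $\cR_A = G_o$. You have correctly identified the exact set/density identities as the ``main obstacle'', but the resolution is not to prove them directly (which seems out of reach without F{\o}lner-type hypotheses); it is to accept the one-sided correspondence and supply the missing inclusion via the overshoot argument.
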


The proof of this theorem provides additional information about the set $A \subset X$ which we do not state here 
(see Theorem \ref{thm2} below for a generalization). Instead, we discuss the sharpness of the assumptions in the
theorem, namely the constant $\frac{3}{2}\mu(A)$ and the ergodicity of $G \acts (X,\mu)$.

\begin{example}[The constant $\frac{3}{2} \mu(A)$ is optimal]
Let $N \geq 4$ and consider the action of $G = (\bZ,+)$ on $X = \bZ/N\bZ$ by translations modulo $N$. The normalized counting measure $\mu$
on $X$ is clearly invariant and ergodic. Let $A = \{0,1\} \subset X$ and note that
\[
\mu(A) = \frac{2}{N} \qand \cR_{A} = N \bZ \cup \big(N\bZ + 1\big) \cup (N\bZ - 1) \subsetneq \bZ.
\]
It is not hard to check that
\[
\underline{d}(\cR_A) = \frac{3}{N} = \frac{3}{2} \mu(A),
\]
but $\cR_A$ is \emph{not} a subgroup of $\bZ$.
\end{example}
 
\begin{example}[Ergodicity of the action is needed]
Given positive real numbers $\delta$ and $\eps$, we shall construct a \emph{non-ergodic} probability measure 
$\mu$ for the shift action by $G = (\bZ,+)$ on the space $2^\bZ$ of all subsets of $\bZ$, endowed with the product topology, such that
\[
\mu(A) < \delta \qand \underline{d}(\cR_A) \leq (1+\eps)\mu(A),
\]
where $A = \{ C \in 2^\bZ \, \mid \, 0 \in C \big\}$, and such that the set of return times $\cR_A$ projects onto every finite quotient of $\bZ$. In 
particular, $\cR_A$ cannot be a subgroup of $\bZ$, nor can it be contained in a subgroup of $\bZ$. 
Here, the exact choice of the sequence $(F_n)$ in $\bZ$ is not so important; for simplicity, we can assume that $F_n = [1,n]$ for all $n \geq 1$. 

The construction of $\mu$ goes along the following lines. Given positive real numbers $\delta$ and $\eps$, we choose $0 < \eta < 1$ such that 
$1 + \eps = \frac{1+\eta}{1-\eta}$, and we pick a strictly increasing sequence $(p_k)$ of prime numbers such that
\begin{equation}
\label{cond1}
\frac{1}{p_1} < \delta \qand \sum_{k \geq 2} \, \frac{1}{p_k} \leq \frac{\eta}{p_1}.
\end{equation}
For every $k \geq 1$, we denote by $\mu_k$ the uniform probability measure on the $\bZ$-orbit of the subgroup $p_k \bZ$ in $2^{\bZ}$ and we note
that $\mu_k(A) = \frac{1}{p_k}$. We now define 
\[
\mu = (1-\eta)\mu_1 + \eta \, \sum_{k \geq 2} \frac{\mu_k}{2^{k-1}},
\]
which is clearly a $\bZ$-invariant \emph{non-ergodic} Borel probability measure on $2^{\bZ}$. One readily checks that
\[
\mu(A) = \frac{1-\eta}{p_1} + \eta \, \sum_{k \geq 2} \frac{1}{p_k 2^{k-1}},
\qand 
\cR_A = \bigcup_{k \geq 1} p_k \bZ \subsetneq \bZ,
\]
whence, 
\[
\frac{1-\eta}{p_1} \leq \mu(A) < \delta
\]
and, thus, by \eqref{cond1} and the choice of $\eta$,
\[
\underline{d}(\cR_A) \leq \sum_{k \geq 1} \frac{1}{p_k} \leq \frac{1+\eta}{p_1} \leq \Big( \frac{1+\eta}{1-\eta} \Big) \, \mu(A) = (1+\eps)\mu(A).
\]
Clearly, $\cR_A$ projects onto every finite quotient of $\bZ$, which finishes our construction.
\end{example}

Our second theorem asserts that the set of return times $\cR_A$ is a still a periodic subset of the group $G$ (that is to say, invariant under a
finite index subgroup) if the weaker upper bound $\underline{d}(\cR_A) < 2 \mu(A)$ holds (this is the ergodic-theoretical analogue of 
\eqref{passtoquotient} described above for \emph{finite} $G$).

\begin{theorem}									
\label{thm2}
Suppose that $\underline{d}(\cR_{A,B}) < \mu(A) + \mu(B)$. Then there exist 
\begin{enumerate}
\item[(i)] a proper finite-index subgroup $G_o < G$ and a homomorphism $\eta$ from $G$ onto the quotient group $G/G_o$, 
\item[(ii)] a non-trivial $G$-factor $\sigma : (X,\mu) \ra (G/G_o,m_{G/G_o})$, where $m_{G/G_o}$ denote the normalized 
counting measure on $G/G_o$ and $G$ acts on $G/G_o$ via $\eta$,
\item[(iii)] a finite subset $M \subset G/G_o$,
\end{enumerate} 
such that $\cR_{A,B} = \eta^{-1}(M)$. Furthermore, there are finite subsets $I_o, J_o \subset G/G_o$ such that
the pair $(I_o,J_o)$ $\sigma$-controls $(A,B)$.
\end{theorem}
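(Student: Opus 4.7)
The plan is to use the correspondence principle for action sets (developed by the first two authors in prior work) to reduce the statement to Kneser's theorem for compact abelian groups.

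First I would invoke the correspondence principle to produce a compact abelian group $K$, a continuous homomorphism $\tau : G \ra K$ with dense image, a $G$-factor map $\pi : (X,\mu) \ra (K, m_K)$ (where $m_K$ is normalized Haar measure and $G$ acts on $K$ via $\tau$), and measurable sets $\tilde A, \tilde B \subset K$ satisfying $m_K(\tilde A) = \mu(A)$, $m_K(\tilde B) = \mu(B)$, with $A \subset \pi^{-1}(\tilde A)$ and $B \subset \pi^{-1}(\tilde B)$ modulo $\mu$-null sets, and such that
\[
\cR_{A,B} \;=\; \tau^{-1}\bigl(\tilde B \tilde A^{-1}\bigr) \qand \underline{d}(\cR_{A,B}) \;=\; m_K\bigl(\tilde B \tilde A^{-1}\bigr).
\]
This is the key technical input: the set of transfer times is captured by the difference set $\tilde B \tilde A^{-1}$ in $K$, with the density of $\cR_{A,B}$ matching its Haar measure. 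The pointwise convergence assumption \eqref{pointwise} on $(F_n)$ is what powers this identity, in place of a F\o lner hypothesis on $G$.

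Combining this with $\underline{d}(\cR_{A,B}) < \mu(A) + \mu(B)$ yields
\[
m_K\bigl(\tilde B \tilde A^{-1}\bigr) < m_K(\tilde A) + m_K(\tilde B),
\]
so Kneser's theorem for compact abelian groups provides a closed subgroup $H$ of $K$ with $m_K(H) > 0$ (hence of finite index) such that $\tilde B \tilde A^{-1}$ is a union of $H$-cosets. Setting $G_o := \tau^{-1}(H)$, the density of $\tau(G)$ in $K$ gives $[G : G_o] = [K : H] < \infty$ and a canonical isomorphism $G/G_o \cong K/H$; let $\eta : G \ra G/G_o$ denote the quotient map, and let $\sigma : X \ra G/G_o$ be the composition of $\pi$ with $K \ra K/H \cong G/G_o$. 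Let $M \subset G/G_o$ be the image of $\tilde B \tilde A^{-1}$ in $K/H$. Then (i)--(iii) hold and $\cR_{A,B} = \eta^{-1}(M)$. The subgroup $G_o$ is proper: if one had $H = K$, then $\tilde B \tilde A^{-1}$ would be non-empty and $K$-invariant, hence equal to $K$, giving $\underline{d}(\cR_{A,B}) = 1$ and contradicting the standing assumption \eqref{standass}.

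For the control assertion, let $I_o, J_o \subset G/G_o$ be the respective images of $\tilde A, \tilde B$ under the quotient $K \ra K/H$. The containments $A \subset \sigma^{-1}(I_o)$ and $B \subset \sigma^{-1}(J_o)$ modulo null sets follow from the corresponding containments into $\pi^{-1}(\tilde A)$ and $\pi^{-1}(\tilde B)$. In the finite group $G/G_o$ endowed with normalized counting measure, one has $\cR_{I_o, J_o} = \eta^{-1}(J_o I_o^{-1})$; but $J_o I_o^{-1}$ is by construction the image of $\tilde B \tilde A^{-1}$ in $K/H$, which coincides with $M$. Therefore $\cR_{I_o, J_o} = \eta^{-1}(M) = \cR_{A,B}$, proving that $(I_o, J_o)$ $\sigma$-controls $(A,B)$.

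The main obstacle is the correspondence principle itself, in particular the density identity $\underline{d}(\cR_{A,B}) = m_K(\tilde B \tilde A^{-1})$ in the absence of a F\o lner assumption on $(F_n)$. Establishing this requires the pointwise convergence along $(F_n)$ to detect Haar measure on the compact group $K$; once it is in hand, the remainder of the argument is a clean application of Kneser's inequality together with routine unwinding of definitions.
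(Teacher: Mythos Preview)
Your proposal has a genuine gap: the correspondence principle you invoke is stronger than what is actually available. The version established in the paper (Lemma~\ref{corrprinciple} and Proposition~\ref{mainprop}) only yields
\[
\mu(A) \leq m_K(I), \qquad \mu(B) \leq m_K(J), \qquad m_K(JI^{-1}) \leq \underline{d}(\cR_{A,B}),
\]
together with the \emph{inclusion} $\cR_{A,B} \subseteq \tau^{-1}(JI^{-1})$. It does \emph{not} deliver the equalities $m_K(\tilde A) = \mu(A)$, $m_K(\tilde B) = \mu(B)$, $\underline{d}(\cR_{A,B}) = m_K(\tilde B \tilde A^{-1})$, nor the identity $\cR_{A,B} = \tau^{-1}(\tilde B \tilde A^{-1})$ that you assume. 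Indeed, the last equality is essentially the conclusion of the theorem, so assuming it as input is circular. You flag this as ``the main obstacle'' but do not resolve it, and there is no reason to expect a correspondence principle of that strength to hold in general.

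The paper closes this gap via the \emph{overshoot relation} (Proposition~\ref{mainprop}): after passing to the finite quotient $Q$ provided by Kneser's theorem, one knows $\cR_{A,B} \subseteq \tau_p^{-1}(J_o I_o^{-1})$, and for any $g$ in the difference $\tau_p^{-1}(J_o I_o^{-1}) \setminus \cR_{A,B}$ one has
\[
\mu(A) + \mu(B) \leq m_Q(I_o) + m_Q(J_o) - m_Q(I_o \cap \tau_p(g)^{-1}J_o).
\]
Since $Q$ is finite and $\tau_p(g) \in J_o I_o^{-1}$, the intersection has measure at least $m_Q(\{e_Q\})$; combined with Kneser's identity $m_Q(J_o I_o^{-1}) = m_Q(I_o) + m_Q(J_o) - m_Q(\{e_Q\}) \leq \underline{d}(\cR_{A,B}) < \mu(A) + \mu(B)$, this gives a contradiction and forces the inclusion to be an equality. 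Your argument omits this step entirely, and without it (or a genuine proof of the strong correspondence you state) the proof is incomplete.
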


An ergodic action $G \acts (X,\mu)$ is \emph{totally ergodic} if every finite-index subgroup of $G$ acts ergodically. We note that if 
$G \acts (X,\mu)$ admits a factor of the form $G/G_o$ for some finite-index subgroup $G_o < G$, then $G_o$ cannot act ergodically
on $(X,\mu)$, whence the $G$-action on $(X,\mu)$ is not totally ergodic. The following corollary of Theorem \ref{thm2} is now immediate. 

\begin{corollary}
\label{cor_toterg}
Suppose that the action $G \acts (X,\mu)$ is totally ergodic. Then, for all measurable subsets $A, B \subset X$ with positive $\mu$-measures, 
\[
\underline{d}(\cR_{A,B}) \geq \min(1,\mu(A) + \mu(B)).
\]
\end{corollary}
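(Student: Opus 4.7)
The plan is to argue by contradiction, invoking Theorem \ref{thm2} as a black box. First I would observe that the standing assumption \eqref{standass} gives $\mu(A)+\mu(B) < 1$, so $\min(1,\mu(A)+\mu(B)) = \mu(A)+\mu(B)$; thus what needs to be ruled out is the possibility that $\underline{d}(\cR_{A,B}) < \mu(A)+\mu(B)$ when the $G$-action is totally ergodic.

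Suppose, toward a contradiction, that this strict inequality holds. Applying Theorem \ref{thm2} then produces a proper finite-index subgroup $G_o < G$, a surjective homomorphism $\eta : G \to G/G_o$, and a non-trivial $G$-factor $\sigma : (X,\mu) \to (G/G_o, m_{G/G_o})$, where $G$ acts on $G/G_o$ by translation via $\eta$. Since $G_o$ is proper, the finite quotient $G/G_o$ has at least two elements, and because $G_o = \ker \eta$ acts trivially on $G/G_o$, the $G_o$-action on $(G/G_o, m_{G/G_o})$ is manifestly not ergodic (every singleton is $G_o$-invariant).

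To close the argument, I would transport this non-ergodicity upstairs: pulling back any non-constant measurable function on $G/G_o$ through $\sigma$ yields a non-constant $G_o$-invariant measurable function on $(X,\mu)$, so the finite-index subgroup $G_o$ fails to act ergodically on $(X,\mu)$. This directly contradicts the hypothesis of total ergodicity, and so establishes the desired bound.

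The principal "obstacle" has already been absorbed into Theorem \ref{thm2}: once that structural result is available, the corollary is a short definitional unwinding of total ergodicity combined with the observation that any factor onto a non-trivial finite homogeneous space $G/G_o$ automatically destroys ergodicity of the kernel $G_o$. Consequently no new combinatorial or ergodic-theoretic input beyond Theorem \ref{thm2} is required.
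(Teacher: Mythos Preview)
Your proof is correct and follows exactly the route the paper takes: the paper states the corollary as ``immediate'' from Theorem~\ref{thm2}, having just observed that a non-trivial factor of the form $(G/G_o,m_{G/G_o})$ forces $G_o$ to act non-ergodically on $(X,\mu)$, contradicting total ergodicity. Your write-up simply makes this unwinding explicit.
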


\begin{example}["Non-conventional" lower asymptotic density]
If $\bZ \acts (X,\mu)$ is totally ergodic, then the sequence $(F_n)$ of finite subsets of $\bZ$ defined by
\[
F_n = \{ k^2 \, : \, k = 1,\ldots,n \big\}, \quad \textrm{for $n \geq 1$},
\]
satisfies \eqref{pointwise} (the almost sure convergence follows from the work of Bourgain \cite{Bo88}, while the identification of the limit - for totally ergodic actions - follows from the equidistribution (modulo 1) of the sequence $(n^2 \alpha)$, for irrational $\alpha$). In particular, we can conclude from Corollary 
\ref{cor_toterg} that
\[
\varliminf_{n \ra \infty} \frac{|\cR_{A,B} \cap \{1,4,\ldots,n^2\}|}{n} \geq \min(1,\mu(A) + \mu(B)),
\]
for all measurable subsets $A, B \subset X$ with positive $\mu$-measures.
\end{example}

\subsection{The structure of transfer times for totally ergodic actions}

\begin{example}[Sturmian sets]
\label{example_sturmian}
Suppose that $G$ admits a homomorphism into $\bT = \bR/\bZ$ with dense image. We set $X = \bT$ and denote by $\mu$ the normalized
Haar measure on $\bT$. Note that $G$ acts on $X$ via $\tau$. Let $A$ and $B$ be two closed intervals of $\bT$ with $\mu(A) + \mu(B) < 1$
such that the endpoints of the interval $BA^{-1}$ in $\bT$ belong to $\tau(G)$. Then it is not hard to show that
\[
\underline{d}(\cR_{A,B}) = \mu(A) + \mu(B) \qand \cR_{A,B} = \tau^{-1}(BA^{-1}) \subseteq G,
\]
which in particular shows that the lower bound in Corollary \ref{cor_toterg} is attained. Pullbacks to $G$ of closed intervals in $\bT$ under homomorphisms with dense images are often 
called \emph{Sturmian sets} in the literature. 
\end{example}

Our next theorem asserts that under the assumption of total ergodicity, then Example \ref{example_sturmian} is the essentially the only example 
when the lower bound in \ref{cor_toterg} is attained. We stress that this phenomenon does not really have a counterpart for \emph{finite} $G$.

\begin{theorem}
\label{thm3}
Suppose that the action $G \acts (X,\mu)$ is totally ergodic. If 
\[
\underline{d}(\cR_{A,B}) = \mu(A) + \mu(B) < 1, 
\]
then there exist
\begin{enumerate}
\item[(i)] a homomorphism $\eta : G \ra \bT$ with dense image. 
\item[(ii)] a $G$-factor $\sigma : (X,\mu) \ra (\bT,m_{\bT})$, where $m_{\bT}$ denotes the normalized Haar measure on $\bT$
and $G$ acts on $\bT$ via $\eta$.
\item[(iii)] closed intervals $I_o$ and $J_o$ of $\bT$ with $m_{\bT}(I_o) = \mu(A)$ and $m_{\bT}(J_o) = \mu(B)$ 
\end{enumerate}
such that $(I_o,J_o)$ $\sigma$-controls $(A,B)$ and
\[
\cR_{A,B} = \eta^{-1}(J_o I_o^{-1}),
\]
modulo at most two cosets of the subgroup $\ker \eta$.
\end{theorem}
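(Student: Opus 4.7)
The plan is to combine the correspondence principle for action sets, used by the first two authors to prove Theorem \ref{thm2}, with the inverse Kneser theorem in connected compact abelian groups (the Raikov--Kneser theorem), which plays the role of the Freiman--Kneser structure theorems for finite groups cited in the introduction.

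First, I would invoke the correspondence principle to obtain a compact metrizable abelian group $(K,m_K)$, a continuous homomorphism $\tau : G \to K$ with dense image, a $G$-factor $\pi : (X,\mu) \to (K,m_K)$ (where $G$ acts on $K$ by translations via $\tau$), and measurable sets $C, D \subseteq K$ with $m_K(C) = \mu(A)$ and $m_K(D) = \mu(B)$, such that $(C,D)$ $\pi$-controls $(A,B)$. Since $\tau(G)$ is dense in $K$, orbits of $\tau(G)$ equidistribute on $K$ along $(F_n)$, yielding $\underline{d}(\tau^{-1}(DC^{-1})) = m_K(DC^{-1})$. The theorem's hypothesis then reads
\[
m_K(DC^{-1}) = m_K(C) + m_K(D) < 1.
\]
Moreover, total ergodicity rules out any nontrivial finite quotient of $K$ (such a quotient, pulled back via $\pi$, would produce a nontrivial finite $G$-factor of $(X,\mu)$), so $K$ must be connected.

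Next, I would apply the inverse Kneser theorem for connected compact abelian groups: if $C, D$ are measurable subsets of a connected compact abelian group $K$ with $m_K(DC^{-1}) = m_K(C) + m_K(D) < 1$, then there exist a continuous surjection $\psi : K \to \bT$ and closed intervals $I_o, J_o \subseteq \bT$ with $m_{\bT}(I_o) = m_K(C)$ and $m_{\bT}(J_o) = m_K(D)$ such that $C = \psi^{-1}(I_o)$ and $D = \psi^{-1}(J_o)$ modulo $m_K$-null sets. Setting $\eta := \psi \circ \tau$ and $\sigma := \psi \circ \pi$ yields the homomorphism and factor map in items (i)--(iii), and $(I_o, J_o)$ $\sigma$-controls $(A, B)$ by functoriality of the control relation.

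For the final assertion, note that $J_o I_o^{-1} \subseteq \bT$ is a closed arc of measure $m_{\bT}(I_o) + m_{\bT}(J_o)$, and $DC^{-1}$ coincides with $\psi^{-1}(J_o I_o^{-1})$ outside an $m_K$-null set that is concentrated on the preimages of the two endpoints of this arc. Pulling back along $\tau$ to $G$, the symmetric difference between $\cR_{A,B} = \tau^{-1}(DC^{-1})$ and $\eta^{-1}(J_o I_o^{-1})$ is therefore contained in at most two cosets of $\ker \eta$. The main obstacle I anticipate is securing the inverse Kneser theorem in the precise form above (\emph{equality} of Haar measures rather than strict inequality), and then tracking the boundary behavior carefully enough to guarantee that the final discrepancy is genuinely at most two cosets of $\ker \eta$ rather than more.
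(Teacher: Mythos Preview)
Your overall architecture matches the paper's: pass to a compact abelian group via the correspondence principle, use total ergodicity to force connectedness, and invoke Kneser's inverse theorem (Satz~2 in \cite{Kneser}) to descend to $\bT$. However, you overstate what the correspondence principle delivers. As formulated in Lemma~\ref{corrprinciple} and Proposition~\ref{mainprop}, it only yields sets $I, J \subset K$ with $\mu(A) \leq m_K(I)$, $\mu(B) \leq m_K(J)$ and $m_K(JI^{-1}) \leq \underline{d}(\cR_{A,B})$; it does \emph{not} give the equalities $m_K(C) = \mu(A)$, $m_K(D) = \mu(B)$, nor does it give control $\cR_{A,B} = \cR_{C,D}$ a priori. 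Your equidistribution step $\underline{d}(\tau^{-1}(DC^{-1})) = m_K(DC^{-1})$ is likewise unjustified: $(F_n)$ is only assumed to satisfy \eqref{pointwise} for $\mu$-a.e.\ starting point, and the identity of $K$ need not be generic for the (possibly non-Jordan-measurable) set $DC^{-1}$.

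The paper closes these gaps differently. First, the equalities of measure are obtained by a squeeze: Corollary~\ref{cor_kneser2} gives $m_K(I) + m_K(J) \leq m_K(JI^{-1}) \leq \underline{d}(\cR_{A,B}) = \mu(A) + \mu(B) \leq m_K(I) + m_K(J)$, forcing equality throughout; only then does $A \subseteq \sigma^{-1}(I_o)$ upgrade to $A = \sigma^{-1}(I_o)$ modulo null sets. Second, and more importantly, the ``at most two cosets'' conclusion is not obtained by comparing $\cR_{A,B}$ with $\cR_{C,D}$ or with $\tau^{-1}(DC^{-1})$, but via the \emph{overshoot inequality} of Proposition~\ref{mainprop}: for every $g \in \tau_p^{-1}(J_o I_o^{-1}) \setminus \cR_{A,B}$ one has
\[
\mu(A) + \mu(B) \leq m_\bT(I_o) + m_\bT(J_o) - m_\bT(I_o \cap \tau_p(g)^{-1}J_o),
\]
which under the standing equalities forces $m_\bT(I_o \cap \tau_p(g)^{-1}J_o) = 0$, i.e.\ $\tau_p(g)$ is an endpoint of the arc $J_o I_o^{-1}$. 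This inequality (proved from $\mu(A \cup g^{-1}B) = \mu(A) + \mu(B)$ when $g \notin \cR_{A,B}$) is the key technical ingredient your outline is missing.
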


\subsection{On ergodic actions which admit small sets of return times}

We retain the notation and assumptions from the beginning of the introduction. 

\begin{definition}[$C$-doubling actions]
Let $C \geq 1$. We say that $G \acts (X,\mu)$ is \emph{a $C$-doubling action} if for every $\delta > 0$, there 
exists a measurable subset $A \subset X$ with $0 < \mu(A) < \delta$ such that $\underline{d}(\cR_A) \leq C\mu(A)$.
\end{definition}

We note that if the action is $C$-doubling, then it also $C'$-doubling for every $C' \geq C$. \\

In light of our theorems above, it seems natural to ask about the structure of $C$-doubling actions. The following theorem provides a complete 
characterization.

\begin{theorem}
\label{thm4}
Let $C \geq 1$. An ergodic action $G \acts (X,\mu)$ is $C$-doubling if and only if there exist
\begin{enumerate}
\item[(i)] an infinite compact metrizable group $K$ and a homomorphism $\eta : G \ra K$ with dense image. 
\item[(ii)] a $G$-factor $\sigma : (X,\mu) \ra (K,m_K)$, where $m_K$ denotes the normalized Haar measure on $K$
and $G$ acts on $K$ via $\eta$.
\end{enumerate}
Furthermore, 
\begin{itemize}
\item If the identity component $K^o$ of $K$ has infinite index, then the action is $1$-doubling.
\item If the identity component $K^o$ of $K$ has finite index, then the action is $2$-doubling. 
\end{itemize}
\end{theorem}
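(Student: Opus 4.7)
The proof naturally splits into sufficiency (assuming the factor $\sigma$) and necessity (assuming $C$-doubling), and in each direction the structure of the identity component $K^o$ of $K$ plays a decisive role.

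For sufficiency, suppose $\sigma : (X,\mu) \to (K,m_K)$ exists with $\eta(G)$ dense in $K$. If $K^o$ has infinite index, then $K/K^o$ is an infinite profinite group, and hence admits open finite-index subgroups of arbitrarily small Haar measure; lifting these to $K$ yields open finite-index subgroups $L \leq K$ with $m_K(L)$ arbitrarily small. Set $A := \sigma^{-1}(L)$. Since $L$ is a subgroup, $\cR_A = \eta^{-1}(LL^{-1}) = \eta^{-1}(L)$, and applying the pointwise hypothesis \eqref{pointwise} to $\mathbf{1}_L \circ \sigma$ at a point $x$ chosen from the positive-measure set $\sigma^{-1}(L)$ (so that $\sigma(x)$ is absorbed by $L$) yields $\underline{d}(\cR_A) = m_K(L) = \mu(A)$, establishing $1$-doubling. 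If instead $K^o$ has finite index, then $K^o$ is a nontrivial connected compact abelian group, and its Pontryagin dual surjects onto $\bZ$, producing a continuous surjection $K^o \twoheadrightarrow \bT$. This extends to a continuous character $\chi : K \to \bT$ by divisibility of $\bT$ and openness of $K^o$. Then $\tau := \chi \circ \eta$ has dense image in $\bT$, and for each small closed interval $I \subset \bT$ with endpoints in $\tau(G)$ the set $A := (\chi \circ \sigma)^{-1}(I)$ satisfies $\underline{d}(\cR_A) = 2\mu(A)$ by Example~\ref{example_sturmian}, establishing $2$-doubling.

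For necessity, assume the action is $C$-doubling and pick $(A_n)$ with $\mu(A_n) \to 0$ and $\underline{d}(\cR_{A_n}) \leq C\mu(A_n)$. The plan is to assemble the required factor from the outputs of Theorems~\ref{thm2} and~\ref{thm3}. If $\underline{d}(\cR_{A_n}) < 2\mu(A_n)$ holds along a subsequence, Theorem~\ref{thm2} furnishes proper finite-index subgroups $G_n \leq G$, factors $\sigma_n : X \to G/G_n$, and finite sets $M_n \subset G/G_n$ with $\cR_{A_n} = \eta_n^{-1}(M_n)$. Applying \eqref{pointwise} to pullbacks of coset indicators gives the density identity $\underline{d}(\eta_n^{-1}(M_n)) = |M_n|/[G:G_n]$, forcing $[G:G_n] \geq 1/(C\mu(A_n)) \to \infty$. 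Replacing $G_n$ by the common refinements $H_n := G_1 \cap \cdots \cap G_n$ and passing to the inverse limit $K := \varprojlim_n G/H_n$ then produces the required infinite compact totally disconnected abelian factor, with $\eta(G)$ dense because it surjects onto each $G/H_n$. Alternatively, if $\underline{d}(\cR_{A_n}) = 2\mu(A_n)$ infinitely often and the action is totally ergodic, Theorem~\ref{thm3} directly yields a factor $X \to \bT$ and we take $K = \bT$; if the action is not totally ergodic, one extracts a finite $G$-factor that feeds back into the inverse-limit construction.

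The main obstacle is the residual subcase $\underline{d}(\cR_{A_n}) > 2\mu(A_n)$ for all large $n$, which is consistent with $C$-doubling only when $C > 2$; here neither Theorem~\ref{thm2} nor Theorem~\ref{thm3} applies to any single $A_n$. We resolve it by a Kronecker-factor dichotomy: if $(X,\mu)$ admitted no nontrivial compact abelian factor, then \eqref{pointwise} combined with the concentration $\mu(A \cap g^{-1}A) \to \mu(A)^2$ on a density-one subset of $G$ (which holds once the Kronecker factor is trivial and the pointwise hypothesis is available) would force $\underline{d}(\cR_A) = 1$ for every $A$ with $\mu(A) > 0$, contradicting $C$-doubling as soon as $\mu(A) < 1/C$. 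Hence a nontrivial Kronecker factor exists, and the sufficiency analysis applied to it determines whether we are in the $1$-doubling or $2$-doubling regime, simultaneously delivering both the iff and the furthermore dichotomy.
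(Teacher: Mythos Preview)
Your sufficiency direction is essentially the paper's, with only cosmetic differences (you extend $\chi$ from $K^o$ to all of $K$, whereas the paper works inside $K^o$ directly; both are fine).

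Your necessity direction, however, takes a detour through Theorems~\ref{thm2} and~\ref{thm3} that the paper avoids, and the detour leaves real gaps. The paper applies the correspondence principle (Lemma~\ref{corrprinciple}) directly to each $A_n$, obtaining compact abelian factors $(K_n,m_{K_n})$ together with measurable sets $I_n \subset K_n$ satisfying $m_{K_n}(I_n) \leq \mu(A_n) < 1/n$; the closure of the diagonal $\{(\eta_n(g)) : g \in G\}$ in $\prod_n K_n$ is then automatically an infinite compact factor, infinite precisely because the pulled-back $I_n$'s have arbitrarily small Haar measure. No case split on the size of $\underline d(\cR_{A_n})$ relative to $2\mu(A_n)$ is needed.

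Your route runs into trouble at several points. In the case $\underline d(\cR_{A_n}) = 2\mu(A_n)$ with the action not totally ergodic, you say one ``extracts a finite $G$-factor that feeds back into the inverse-limit construction'', but a single finite factor does not yield an \emph{infinite} inverse limit, and you have not produced the infinitely many strictly shrinking finite-index subgroups this would require. More seriously, in your residual subcase ($C>2$ and $\underline d(\cR_{A_n}) > 2\mu(A_n)$ throughout) you conclude only that the Kronecker factor is \emph{nontrivial}, and then invoke the sufficiency analysis on it; but sufficiency requires the factor to be \emph{infinite}, and nothing you have written rules out a finite Kronecker factor. It can be ruled out --- a finite Kronecker factor of order $N$ forces $\underline d(\cR_A) \geq 1/N$ for every positive-measure $A$, contradicting $\underline d(\cR_{A_n}) \to 0$ --- but establishing that lower bound is again the correspondence principle you were trying to bypass. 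Finally, your justification for $\underline d(\cR_A)=1$ in the weakly-mixing case appeals to correlation concentration along a density-one set of $g$; the standard proofs of this use a F\o lner sequence, which the paper explicitly does \emph{not} assume for $(F_n)$, so this step too ultimately rests on Lemma~\ref{corrprinciple} (this is essentially the content of the Scholium).

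In short, the clean path is the paper's: apply Lemma~\ref{corrprinciple} once per $A_n$ and take a product.
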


\begin{remark}
Theorem \ref{thm4} in particular asserts that an ergodic action is $C$-doubling for some $C \geq 1$ if and only if it has an infinite Kronecker 
factor (see e.g. \cite{BF} for definitions).
\end{remark}

The same line of argument as the one leading up to Theorem \ref{thm4} also proves the following result, whose proof we leave to the reader. 
We recall that $G \acts (X,\mu)$ is \emph{weakly mixing} if the diagonal action $G \acts (X \times X,\mu \otimes \mu)$ is ergodic.

\begin{scholium}
There exist measurable subsets $A$ and $B$ of $X$ such that $\underline{d}(\cR_{A,B}) < 1$ if and only if $G \acts (X,\mu)$ is \emph{not} 
weakly mixing. 
\end{scholium}

\subsection{A brief outline of the proofs}

Our first observation is that for any two measurable subsets $A$ and $B$ of $X$ with positive $\mu$-measures, there is a measurable 
$\mu$-conull subset $X_1$ of $X$ such that
\[
\cR_{A,B} = B_x A_x^{-1}, \quad \textrm{for all $x \in X_1$},
\]
where $A_x$ and $B_x$ are the return times of the point $x$ to the sets $A$ and $B$ (see Subsection \ref{subsec:deftransfer} below for notation). 
We then observe in Lemma \ref{lemma_generic} that for some measurable $\mu$-conull subset $X_2 \subset X$, 
\[
\underline{d}(B_xA_x^{-1}) \geq \mu(A_x^{-1}B), \quad \textrm{for all $x \in X_2$},
\]
which puts us in the framework of our earlier paper \cite{BF}. We combine some of the key points of this paper in Lemma \ref{corrprinciple}
below, the outcome of which is that there exist
\begin{itemize}
\item a measurable $G$-invariant $\mu$-conull subset $X_3 \subset X_1 \cap X_2$,
\item a compact and metrizable abelian group $K$ with Haar probability measure $m_K$ and 
a homomorphism $\tau : G \ra K$ with dense image,
\item a $G$-equivariant measurable map $\pi : X_3 \ra K$ such that $\pi_*(\mu |_{X_3}) = m_K$, where $G$ acts on $K$ via $\tau$,
\item two measurable subsets $I$ and $J$ of $K$,
\end{itemize}
such that
\[
\mu(A_x^{-1}B) = m_K(JI^{-1})
\]
and 
\[
A \cap X_3 \subset \pi^{-1}(I) \qand B \cap X_3 \subset \pi^{-1}(J).
\]
If $A = B$, then we can take $I = J$. We see that $\mu(A) \leq m_K(I)$ and $\mu(B) \leq m_K(J)$. 
In the settings of Theorem \ref{thm1}, Theorem \ref{thm2} and Theorem \ref{thm3}, we see that
\[
m_K(I I^{-1}) < \frac{3}{2} m_K(I)
\qand
m_K(JI^{-1}) < m_K(I) + m_K(J)
\]
and
\[
m_K(J I^{-1}) = \min(1,m_K(I) + m_K(J))
\]
respectively. At this point, we use some classical results \cite{Kneser} of Kneser for sumsets in compact abelian groups,
to conclude that the pair $(I,J)$ is "reduced" to a nicer pair $(I_o,J_o)$ in a much "smaller" quotient group $Q$ of $K$ (see 
Definition \ref{def_reduction} for details). The point of all this is that the transfer times $\cR_{A,B}$ is \emph{contained in} the 
transfer times between $I_o$ and $J_o$, which is equal to the set $\eta^{-1}(J_oI_o^{-1})$. Here $\eta : G \ra Q$ is the composition 
of $\tau$ with the quotient map from  $K$ to $Q$. To prove that the sets actually coincide,
we shall use the \emph{overshoot relation}
\begin{equation}
\label{defovershoot}
\mu(A) + \mu(B) \leq m_Q(I_o) + m_Q(J_o) - m_Q(I_o \cap \eta(g)^{-1}J_o), 
\end{equation}
for all $g \in \eta^{-1}(J_o I_o^{-1}) \setminus \cR_{A,B}$. This inequality is proved in Proposition \ref{mainprop}. It turns out that in 
the settings of the theorems above, the sets $I_o$ and $J_o$ have the property that the $m_Q$-measure of the 
intersection $I_o \cap \eta(g)^{-1}J_o$, for $g$ in $\eta^{-1}(J_o I_o^{-1}) \setminus \cR_{A,B}$, is large enough to 
contradict \eqref{defovershoot}, whence we can conclude that $\cR_{A,B} = \eta^{-1}(J_o I_o^{-1})$.

\subsection{Ergodic actions of semi-groups}

Our definition of transfer times between two sets makes sense also for actions by non-invertible maps. Suppose that $S$ is a countable 
abelian semigroup, sitting inside a countable abelian group $G$. If $S$ acts ergodically by measure-preserving maps on a standard 
probability measure space $(X,\mu)$, then, under some technical assumptions (see e.g. \cite{La} for more details in the general setting), one can construct a
so called \emph{natural extension} $(\widetilde{X},\widetilde{\mu})$ of the $S$-action, which is a measure-preserving $G$-action, together with 
a measurable $S$-equivariant map $\rho : \widetilde{X} \ra X$, mapping $\widetilde{\mu}$ to $\mu$. It is not hard to see that if we
set 
\[
\widetilde{A} = \rho^{-1}(A) \qand \widetilde{B} = \rho^{-1}(B),
\]
then 
\[
\cR_{\widetilde{A},\widetilde{B}} \cap S = \{ s \in S \, \mid \, \mu(A \cap s^{-1}B) > 0 \big\},
\]
where the transfer times $\cR_{\widetilde{A},\widetilde{B}}$ are measured with respect to $\tilde{\mu}$. We can now apply our results above
to the $G$-action on the natural extension $(\widetilde{X},\widetilde{\mu})$ (which is ergodic if and only if the semi-group action $S \acts (X,\mu)$ is), and conclude the same results for the 
$S$-action. We leave the details to the interested reader.


\medskip

\subsection{Acknowledgements}
I.S. is grateful to SMRI and the School of Mathematics and Statistics at Sydney University for funding his visit and for their hospitality. 
M.B and A.F wish to thank the organizers of the MFO workshop "Groups, dynamics and approximation", during which parts of this paper
were written, for the invitation.


\section{Preliminaries}

\subsection{Transfer times and action sets}
\label{subsec:deftransfer}
Given a subset $D$ of $X$ and $x \in X$, we define the \emph{set of return time of $x$ to $D$} by
\[
D_x = \{ g \in G \, \mid \, gx \in D \big\} \subset G,
\] 
and we note that $(gD)_x = gD_x$ and $D_x \, g^{-1} = D_{gx}$ for all $g \in G$.
If $F$ is a subset of $G$, then we define the \emph{action set} $FD \subset X$ by
\[
FD = \bigcup_{f \in F} fD,
\]
and we note that $(FD)_x = F D_x$. If $E$ is another subset of $X$, then
\[
(D \cap E)_x = D_x \cap E_x \qand (D \cup E)_x = D_x \cup E_x.
\]
In particular, 
\begin{equation}
\label{eq_DcapE}
D_x \cap g^{-1} E_x = (D \cap g^{-1} E)_x \qand D_x \cup g^{-1} E_x = (D \cup g^{-1} E)_x, \quad \textrm{for all $g \in G$}.
\end{equation}

\subsection{Transfer times as difference sets}

Let $D$ be a measurable subset of $X$, and define
\[
D^{e} = \big\{ x \in X \, | \, D_x = \emptyset \big\} 
\qand
D^{ne} = \big\{ x \in X \, | \, D_x \neq \emptyset \big\}.
\]
We note that $D^{e} = \bigcap_{g \in G} gD^c$ and $D^{ne} = GD$. In particular, $D^e$ and $D^{ne}$ are both measurable and $G$-invariant. 
Since $\mu$ is assumed to be ergodic, we conclude that
\begin{equation}
\label{De}
\mu(D^e) = 1 \quad \textrm{if $\mu(D) = 0$}
\end{equation}
and
\begin{equation}
\label{Dne}
\mu(D^{ne}) = 1 \quad \textrm{if $\mu(D) > 0$}.
\end{equation}

\begin{lemma}
\label{lemma_passtoreturns}
Let $A$ and $B$ be two measurable subsets of $X$ with positive $\mu$-measures, and define
\[
X_1 = \Big( \bigcap_{g \in \cR_{A,B}}  \{ x \in X \, \mid \, A_x \cap g^{-1}B_x \neq \emptyset \big\} \Big)
\cap
\Big( \bigcap_{g \notin \cR_{A,B}}  \{ x \in X \, \mid \, A_x \cap g^{-1} B_x = \emptyset \big\} \Big).
\]
Then $X_1$ is a $G$-invariant measurable $\mu$-conull subset of $X$ and 
\[
\cR_{A,B} = B_x A_x^{-1}, \quad \textrm{for all $x \in X_1$}.
\]
\end{lemma}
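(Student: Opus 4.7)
The plan is to use the identity $A_x \cap g^{-1}B_x = (A \cap g^{-1} B)_x$ from \eqref{eq_DcapE} to recast the two defining conditions for membership in $X_1$ in terms of the single measurable set $D_g := A \cap g^{-1}B$. By the definition \eqref{def_RAB} of the transfer times, the condition $g \in \cR_{A,B}$ is equivalent to $\mu(D_g) > 0$, while $g \notin \cR_{A,B}$ is equivalent to $\mu(D_g) = 0$. Applying \eqref{Dne} in the first case yields that $\{x : (D_g)_x \neq \emptyset\} = (D_g)^{ne}$ is a $\mu$-conull measurable $G$-invariant set, and applying \eqref{De} in the second case yields the analogous statement for $\{x : (D_g)_x = \emptyset\} = (D_g)^{e}$. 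Thus each set appearing in the intersection defining $X_1$ is $\mu$-conull, measurable, and $G$-invariant.

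Next, I would observe that because $G$ is countable, $X_1$ is an intersection of countably many such sets, hence itself $G$-invariant, measurable, and $\mu$-conull. This takes care of the first assertion of the lemma.

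For the identity $\cR_{A,B} = B_x A_x^{-1}$ on $X_1$, I would unwind the definition of the difference set: $g \in B_x A_x^{-1}$ if and only if there exists $a \in A_x$ with $g a \in B_x$, i.e.\ $a \in A_x \cap g^{-1} B_x$; equivalently, $A_x \cap g^{-1} B_x \neq \emptyset$. For $x \in X_1$, the two clauses of the intersection defining $X_1$ guarantee that this non-emptiness occurs precisely when $g \in \cR_{A,B}$, giving the desired equality of subsets of $G$.

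No step here is a genuine obstacle; the only care needed is in handling the two complementary families indexed by $\cR_{A,B}$ and its complement simultaneously, and in invoking countability of $G$ (a standing assumption) so that the $\mu$-conullity is preserved under the intersection.
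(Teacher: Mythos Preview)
Your proposal is correct and follows essentially the same approach as the paper's own proof: define $D_g = A \cap g^{-1}B$, apply \eqref{De} and \eqref{Dne} according to whether $g \in \cR_{A,B}$ or not, use countability of $G$ to intersect, and then unwind the definition of $B_x A_x^{-1}$ to get the identity on $X_1$.
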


\begin{proof}
Measurability and $G$-invariance of $X_1$ is clear, and $\mu$-conullity of $X_1$ readily follows from applying \eqref{De} and \eqref{Dne} to the sets
\[
D(g) := A \cap g^{-1}B, \quad \textrm{for $g \in G$}.
\]
Indeed, $\mu(D(g)) > 0$ if and only if $g \in \cR_{A,B}$, and by \eqref{eq_DcapE}, we have
\[
D(g)^{e} = \big\{ x \in X \, \mid \, A_x \cap g^{-1} B_x = \emptyset \big\}
\qand
D(g)^{ne} = \big\{ x \in X \, \mid \, A_x \cap g^{-1}B_x \neq \emptyset \big\}.
\]
Note that for every $x \in X$,
\begin{eqnarray*}
B_x A^{-1}_x 
&=& 
\{ g \in G \, \mid \, A_x \cap g^{-1} B_x \neq \emptyset \big\} \\[2pt]
&=& 
\{ g \in \cR_{A,B} \, \mid \, A_x \cap g^{-1} B_x \neq \emptyset \big\} \sqcup \{ g \notin \cR_{A,B} \, \mid \, A_x \cap g^{-1} B_x \neq \emptyset \big\}.
\end{eqnarray*}
If $x \in X_1$, then
\[
\{ g \in \cR_{A,B} \, \mid \, A_x \cap g^{-1} B_x \neq \emptyset \big\} = \cR_{A,B}
\qand
\{ g \notin \cR_{A,B} \, \mid \, A_x \cap g^{-1} B_x \neq \emptyset \big\} = \emptyset,
\]
which finishes the proof.
\end{proof}

\subsection{Generic points}

We recall our assumptions on the sequence $(F_n)$ of finite subsets of $G$: For every bounded measurable function $\varphi$ on $X$,
there exists a $\mu$-conull subset $X_\varphi \subset X$ such that 
\[
\lim_n \frac{1}{|F_n|} \sum_{g \in F_n} \varphi(gx) = \int_X \varphi \, d\mu, \quad \textrm{for all $x \in X_\varphi$}.
\]
The points in $X_\varphi$ are said to be \emph{generic} with respect to $\mu$, $\varphi$ and the sequence $(F_n)$.
\begin{lemma}
\label{lemma_generic}
Let $A$ and $B$ be two measurable subsets of $X$ with positive $\mu$-measures. Then there exists a measurable $\mu$-conull subset $X_2 \subseteq X$
such that 
\[
\mu(A_x^{-1}B) \leq \underline{d}(\cR_{A,B}), \quad \textrm{for all $x \in X_2$}.
\]
Furthermore, for every finite subset $L$ of $G$,
\[
\underline{d}(L^{-1}A_x) = \mu(L^{-1}A) \qand \underline{d}(L^{-1}B_x) = \mu(L^{-1}B), 
\]
and for every $g \notin \cR_{A,B}$, 
\[
\underline{d}(A_x \cup g^{-1}B_x) = \mu(A) + \mu(B),
\]
for all $x \in X_2$.
\end{lemma}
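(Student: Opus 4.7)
The plan is to define $X_2$ as a countable intersection of $\mu$-conull "generic" sets. Since $G$ is countable, the finite subsets of $G$ form a countable collection, so I can intersect the generic-point set for $\mathbf{1}_{L^{-1}A}$ and for $\mathbf{1}_{L^{-1}B}$ over every finite $L \subset G$, then intersect further with the generic-point set for $\mathbf{1}_{A \cup g^{-1}B}$ over every $g \in G$, and finally intersect with the $G$-invariant $\mu$-conull set $X_1$ from Lemma \ref{lemma_passtoreturns}. The resulting $X_2$ remains $\mu$-conull.

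The second and third assertions then drop out of the pointwise convergence assumption \eqref{pointwise} combined with the identity $(FD)_x = FD_x$ already recorded in Subsection \ref{subsec:deftransfer}. Indeed, $L^{-1}A_x = (L^{-1}A)_x$, so for $x \in X_2$, summing $\mathbf{1}_{L^{-1}A}(gx)$ over $g \in F_n$ and dividing by $|F_n|$ converges to $\mu(L^{-1}A)$, giving $\underline{d}(L^{-1}A_x) = \mu(L^{-1}A)$; the statement for $B$ is identical. For the third assertion, $A_x \cup g^{-1}B_x = (A \cup g^{-1}B)_x$, and when $g \notin \cR_{A,B}$ the identity $\mu(A \cap g^{-1}B) = 0$ reduces $\mu(A \cup g^{-1}B)$ to $\mu(A) + \mu(B)$.

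The first assertion is the one that takes a small twist, and it is the main obstacle: $A_x^{-1}B$ depends on $x$ through the (generally infinite) set $A_x$, so \eqref{pointwise} cannot be applied directly to $\mathbf{1}_{A_x^{-1}B}$. The key observation is that, since $G$ is abelian, Lemma \ref{lemma_passtoreturns} gives $\cR_{A,B} = B_x A_x^{-1} = A_x^{-1} B_x$ for $x \in X_1$, and this set is precisely $(A_x^{-1}B)_x$ by the identities in Subsection \ref{subsec:deftransfer}. I will enumerate $A_x = \{a_1, a_2, \ldots\}$ (possible since $G$ is countable) and set $L_n = \{a_1, \ldots, a_n\}$. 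Each $L_n$ is finite, and $L_n^{-1}B_x = (L_n^{-1}B)_x \subseteq A_x^{-1}B_x = \cR_{A,B}$, so
\[
\underline{d}\bigl((L_n^{-1}B)_x\bigr) \leq \underline{d}(\cR_{A,B}).
\]
By the second assertion (which applies to every finite $L \subset G$), the left-hand side equals $\mu(L_n^{-1}B)$. Monotone convergence identifies $\mu(A_x^{-1}B) = \lim_n \mu(L_n^{-1}B)$, and letting $n \to \infty$ yields $\mu(A_x^{-1}B) \leq \underline{d}(\cR_{A,B})$, completing the proof.
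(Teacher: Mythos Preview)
Your proposal is correct and follows essentially the same route as the paper: intersect the generic sets $X_{\chi_{L^{-1}A}} \cap X_{\chi_{L^{-1}B}}$ over all finite $L \subset G$, intersect with $X_1$ from Lemma \ref{lemma_passtoreturns}, and then deduce the first assertion by exhausting $A_x$ with finite subsets and passing to the limit via $\sigma$-additivity. The only cosmetic difference is in the third assertion: you add the generic sets for $\chi_{A \cup g^{-1}B}$ to the intersection and read off the conclusion directly, whereas the paper avoids introducing these extra sets by observing that for $x \in X_1$ and $g \notin \cR_{A,B}$ the sets $A_x$ and $g^{-1}B_x$ are disjoint, so $\underline{d}(A_x \cup g^{-1}B_x)$ splits as the sum of two sequences each of which already converges by the $L=\{e\}$ and $L=\{g\}$ cases of the second assertion.
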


\begin{proof}
Given a subset $L \subset G$, we define 
\[
\varphi_L = \chi_{L^{-1}A} \qand \psi_L = \chi_{L^{-1}B} \qand X_L = X_{\varphi_L} \cap X_{\psi_L}.
\]
We note $X_L$ is a measurable $\mu$-conull subset of $X$ and for every $x \in X_L$, 
\begin{equation}
\label{LlimitA}
\underline{d}(L^{-1}A_x) = \lim_n \frac{1}{|F_n|} \sum_{g \in F_n} \chi_{L^{-1}A}(gx) = \mu(L^{-1}A)
\end{equation}
and
\begin{equation}
\label{LlimitB}
\underline{d}(L^{-1}B_x) = \lim_n \frac{1}{|F_n|} \sum_{g \in F_n} \chi_{L^{-1}B}(gx) = \mu(L^{-1}B).
\end{equation}
We now set $X_2' = \bigcap_{L} X_L$, where the intersection is taken over the countable set of all \emph{finite} subsets of $G$. Then
$X_2'$ is a measurable $\mu$-conull subset of $X$, and for every $x \in X_2'$ and for every finite subset $L$ of $A_x$, we have
\[
\underline{d}(A_x^{-1}B_x) \geq \underline{d}(L^{-1}B_x) = \mu(L^{-1}B).
\]
Since $\mu$ is $\sigma$-additive and $L \subset A_x$ is an arbitrary finite set, we can now conclude that 
\[
\underline{d}(A_x^{-1}B_x) \geq \mu(A_x^{-1}B) \quad \textrm{for all $x \in X_2'$}. 
\]
By Lemma \ref{lemma_passtoreturns}, there exists a measurable $\mu$-conull subset $X_1 \subseteq X$ such that 
$\cR_{A,B} = B_x A_x^{-1}$ for all $x \in X_1$, and thus, since $G$ is abelian,
\[
\underline{d}(\cR_{A,B}) = \underline{d}(A_x^{-1}B_x) \geq \mu(A_x^{-1}B), \quad \textrm{for all $x \in X_1 \cap X_2'$}.
\]
Let $X_2 = X_1 \cap X_2'$ and pick $x \in X_2$. We note that if $g \notin \cR_{A,B} = B_x A_x^{-1}$, then $A_x \cap g^{-1}B_x = \emptyset$,
whence 
\begin{eqnarray*}
\underline{d}(A_x \cup g^{-1}B_x) 
&=& 
\varliminf_n \Big( \frac{|A_x \cap F_n|}{|F_n|} +  \frac{|(g^{-1}B)_x \cap F_n|}{|F_n|} \Big) \\
&=&
\mu(A) + \mu(B) = \mu(A \cup g^{-1}B),
\end{eqnarray*}
by \eqref{LlimitA} and \eqref{LlimitB} (applied to the sets $L = \{e\}$ and $L = \{g\}$ respectively), since the limits of each term exist 
(the last identity follows from the fact that $\mu(A \cap g^{-1}B) = 0$ if $g \notin \cR_{A,B}$). Since $x \in X_2$ is arbitrary, this finishes the proof. 
\end{proof}

\begin{corollary}
\label{cor_max}
For all measurable subsets $A$ and $B$ of $X$, we have
\[
\underline{d}(\cR_{A,B}) \geq \max(\mu(A),\mu(B)).
\]
\end{corollary}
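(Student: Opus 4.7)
The plan is to derive Corollary~\ref{cor_max} from Lemma~\ref{lemma_generic} together with a short Fubini-and-ergodicity argument. The lemma already yields a measurable $\mu$-conull set $X_2 \subseteq X$ with $\underline{d}(\cR_{A,B}) \geq \mu(A_x^{-1}B)$ for every $x \in X_2$, so the task reduces to showing that $\mu(A_x^{-1}B) \geq \max(\mu(A), \mu(B))$ for $\mu$-a.e. $x$. The lower bound by $\mu(B)$ is immediate: for $\mu$-a.e. $x$ the set $A_x$ is nonempty (since $\mu(A) > 0$), and for any $a_0 \in A_x$ one has $a_0^{-1}B \subseteq A_x^{-1}B$, hence $\mu(A_x^{-1}B) \geq \mu(a_0^{-1}B) = \mu(B)$.

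For the lower bound by $\mu(A)$, I would first observe that the map $x \mapsto \mu(A_x^{-1}B)$ is $G$-invariant. Indeed, the identity $A_{gx} = A_x g^{-1}$ gives $A_{gx}^{-1} = g A_x^{-1}$, so $A_{gx}^{-1}B = g(A_x^{-1}B)$, and measures of translates agree by $G$-invariance of $\mu$. By ergodicity this function is therefore $\mu$-a.e. equal to a constant, which I would identify (or at least lower-bound) by integrating over $x$ and swapping via Fubini. Setting
\[
E := \{(x,y) \in X \times X : y \in A_x^{-1}B\} = \{(x,y) : A_x \cap B_y \neq \emptyset\},
\]
the $y$-section of $E$ is $\{x : \exists g \in B_y,\ gx \in A\} = \bigcup_{g \in B_y} g^{-1}A$, whose $\mu$-measure is at least $\mu(A)$ whenever $B_y \neq \emptyset$, i.e.\ for $\mu$-a.e.\ $y$. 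The Fubini swap then gives
\[
\int_X \mu(A_x^{-1}B)\, d\mu(x) = \int_X \mu\Big(\bigcup_{g \in B_y} g^{-1}A\Big)\, d\mu(y) \geq \mu(A),
\]
so the a.e.\ constant value of $\mu(A_x^{-1}B)$ is at least $\mu(A)$.

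Combining the two bounds gives $\mu(A_x^{-1}B) \geq \max(\mu(A), \mu(B))$ a.e., which via Lemma~\ref{lemma_generic} establishes the corollary. The conceptually interesting step is the symmetry trick: the Fubini swap converts the ``hard'' inequality $\mu(A_x^{-1}B) \geq \mu(A)$ into the inequality $\mu(\bigcup_{g \in B_y} g^{-1}A) \geq \mu(A)$, which is obtained exactly as in the easy $\mu(B)$ half. The only routine verification is the measurability of $E$, which is immediate from the countability of $G$ via $E = \bigcup_{g \in G} \{(x,y) : gx \in A,\ gy \in B\}$.
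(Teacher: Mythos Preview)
Your proof is correct. The paper's own argument is a two-liner: it derives $\underline{d}(\cR_{A,B}) \geq \mu(A_x^{-1}B) \geq \mu(B)$ exactly as you do for the $\mu(B)$ half, and then asserts the $\mu(A)$ half ``since the roles of $A$ and $B$ are completely symmetric.'' Your Fubini-and-ergodicity step is what makes that symmetry precise at the level of the quantity $\mu(A_x^{-1}B)$: you show its a.e.\ constant value is $(\mu\otimes\mu)(E)$, and by slicing $E$ in the other variable this equals the a.e.\ value of $\mu(B_y^{-1}A) \geq \mu(A)$. This is essentially the identity $\mu(A_x^{-1}B) = (\mu\otimes\mu)\big(G(A\times B)\big)$ of \cite[Lemma~5.3]{BF}, which the paper itself invokes later in the proof of Lemma~\ref{corrprinciple}. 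Note that a naive swap of $A$ and $B$ in Lemma~\ref{lemma_generic} would only yield $\underline{d}(\cR_{B,A}) \geq \mu(A)$; since $\cR_{B,A} = \cR_{A,B}^{-1}$ and $\underline{d}$ need not be inversion-invariant for the general sequences $(F_n)$ allowed here, your route through $\mu(A_x^{-1}B)$ is the one that actually closes the argument.
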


\begin{proof}
By Lemma \ref{lemma_generic}, there is a measurable $\mu$-conull subset $X_2$ of $X$ such that
\[
\underline{d}(\cR_{A,B}) \geq \mu(A_x^{-1}B) \geq \mu(B), \quad \textrm{for all $x \in X_2$}.
\]
Since the roles of $A$ and $B$ are completely symmetric, this proves the corollary.
\end{proof}

\subsection{A correspondence principle for action sets}

The key ingredient in the proofs of Theorem \ref{thm1}, Theorem \ref{thm2} and Theorem \ref{thm3} is the following merger of a series of 
observations made by the first two authors in \cite{BF}. We outline the anatomy of this merger in the proof below. The rough idea is the 
action sets in an arbitrary ergodic $G$-action can be controlled by sets in an isometric factor (that is to say, a compact group, on which $G$
acts by translations via a homomorphism from $G$ into the compact group with dense image).

\begin{lemma}
\label{corrprinciple}
Let $A$ and $B$ be measurable subsets of $X$ with positive $\mu$-measures. Then there exist 
\vspace{0.1cm}
\begin{itemize}
\item a $G$-invariant measurable $\mu$-conull subset $X_3 \subseteq X$,
\item a compact and metrizable abelian group $K$ with Haar probability measure $m_K$, a homomorphism $\tau : G \ra K$ with dense image,
and two measurable subsets $I$ and $J$ of $K$,
\item a $G$-equivariant measurable map $\pi : X_3 \ra K$ such that $\pi_*(\mu |_{X_3}) = m_K$, where $G$ acts on $K$ via $\tau$,
\end{itemize} 
\vspace{0.1cm}
such that 
\[
A \cap X_3 \subseteq \pi^{-1}(I) \qand B \cap X_3 \subseteq \pi^{-1}(J)
\]
and
\[
\mu(A_x^{-1}B) = m_K(I^{-1}J)
\qand
A_x^{-1}(B \cap X_3) \subseteq \pi^{-1}(\pi(x) I^{-1}J), 
\]
for all $x \in X_3$. In the case when $A = B$, we can take $I = J$. Finally, if $G \acts (X,\mu)$ is totally ergodic, then $K$ must be connected.
\end{lemma}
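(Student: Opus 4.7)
\emph{Plan.} The proof is an assembly of ingredients from \cite{BF} organized around the Kronecker factor of $(X,\mu)$. The first step is to take $\pi:(X,\mu)\to(K,m_K)$ to be this Kronecker factor: $K$ is a compact metrizable abelian group with normalized Haar measure $m_K$, $\tau:G\to K$ is a homomorphism whose image must be dense by ergodicity of the induced $G$-action on $K$, and $\pi$ is a $G$-equivariant measurable surjection defined on a $G$-invariant $\mu$-conull subset $X_3\subseteq X$ with $\pi_*(\mu|_{X_3})=m_K$. Total ergodicity of $G\acts(X,\mu)$ descends to the Kronecker factor; a proper clopen subgroup of $K$ would produce a finite quotient of $K$, hence a proper finite-index subgroup of $G$ failing to act ergodically on $(X,\mu)$. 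So in the totally ergodic case $K$ must be connected.

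Next, define $I,J\subseteq K$ as the essential images of $A$ and $B$: writing $g_A,g_B:K\to[0,1]$ for the functions satisfying $g_A\circ\pi=\bE[\chi_A\mid\pi^{-1}(\cB_K)]$ and $g_B\circ\pi=\bE[\chi_B\mid\pi^{-1}(\cB_K)]$, put
\[
I=\{k\in K:g_A(k)>0\},\qquad J=\{k\in K:g_B(k)>0\}.
\]
After shrinking $X_3$ by a $\mu$-null, $G$-invariant set, one has $A\cap X_3\subseteq\pi^{-1}(I)$ and $B\cap X_3\subseteq\pi^{-1}(J)$; when $A=B$, $g_A=g_B$ forces $I=J$. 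The containment $A_x^{-1}(B\cap X_3)\subseteq\pi^{-1}(\pi(x)I^{-1}J)$ is then purely formal from $G$-equivariance: for $g\in A_x$ one has $\tau(g)\pi(x)=\pi(gx)\in I$, hence $\tau(g)^{-1}\in\pi(x)I^{-1}$; for $y\in B\cap X_3$ one has $\pi(y)\in J$; so $\pi(g^{-1}y)\in\pi(x)I^{-1}J$. In particular $\mu(A_x^{-1}B)\le m_K(I^{-1}J)$ for every $x\in X_3$.

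The main obstacle is the matching lower bound $\mu(A_x^{-1}B)\ge m_K(I^{-1}J)$ for $\mu$-a.e. $x$, equivalently that $A_x^{-1}B$ is $\mu$-conull inside $\pi^{-1}(\pi(x)I^{-1}J)$. For this I would invoke the correspondence results of \cite{BF}: for $\mu$-a.e. $x$, the image $\tau(A_x)$ is equidistributed in $I\pi(x)^{-1}$ against $m_K|_{I\pi(x)^{-1}}$, and combining this with the pointwise ergodic theorem applied to the relatively independent self-join of $(X,\mu)$ over the Kronecker factor shows that for $m_K\otimes m_K$-a.e. pair $(\pi(x),\pi(z))$ with $\pi(z)\in\pi(x)I^{-1}J$ and $\mu\otimes\mu$-a.e. lift $(x,z)$, the set $\{g\in G:gx\in A,\ gz\in B\}$ is nonempty, indeed asymptotically dense along $(F_n)$. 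Disintegrating $\mu=\int_K\mu_k\,dm_K(k)$ and applying Fubini then identifies $A_x^{-1}B$ with $\pi^{-1}(\pi(x)I^{-1}J)$ modulo $\mu$-null sets, yielding the measure identity. This equidistribution-plus-Fubini step is the only substantive analytic input required, and it is essentially the content already extracted in \cite{BF}.
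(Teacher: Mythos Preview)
Your approach is essentially the same as the paper's: both invoke the \cite{BF} machinery to realize $I$ and $J$ as the essential supports of $A$ and $B$ in the Kronecker factor $K$, derive the containments formally from $G$-equivariance, and handle connectedness of $K$ under total ergodicity via the obvious open-subgroup argument. The paper is slightly more economical in that it cites \cite[Lemma~5.3]{BF} and \cite[Theorem~5.1]{BF} directly, routing the measure identity $\mu(A_x^{-1}B)=m_K(I^{-1}J)$ through the intermediate quantity $\mu\otimes\mu(G(A\times B))$ rather than sketching the equidistribution-and-Fubini argument on the relatively independent joining as you do; but the underlying analytic content is the same.
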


\begin{remark}
If $I$ and $J$ are Borel measurable subsets of $K$, then their difference set $I^{-1}J$ might fail to be Borel measurable. However, since 
$I^{-1}J$ is the image of the Borel measurable subset $I \times J$ in $K \times K$ under the continuous map $(k_1,k_2) \mapsto k_1^{-1}k_2$,
we see that $I^{-1}J$ is an analytic set, so in particular measurable with respect to the \emph{completion} of the Borel $\sigma$-algebra of $K$ with
respect to $m_K$, and thus the expression $m_K(I^{-1}J)$ is well-defined.
\end{remark}

\begin{proof}
By \cite[Lemma 5.3]{BF}, there exists a $G$-invariant measurable $\mu$-conull subset $X_3' \subset X$ such that 
\[
\mu(A_x^{-1}B) = \mu \otimes \mu(G(A \times B)), \quad \textrm{for all $x \in X_3'$}.
\]
By \cite[Theorem 5.1]{BF}, there exist
\begin{itemize}
\item a measurable $G$-invariant $\mu$-conull subset $X_3'' \subset X$,
\item a compact and metrizable abelian group $K$ with Haar probability measure $m_K$ and 
a homomorphism $\tau : G \ra K$ with dense image,
\item a $G$-equivariant measurable map $\pi : X_3'' \ra K$ such that $\pi_*(\mu |_{X''_3}) = m_K$, where $G$ acts on $K$ via $\tau$,
\item two measurable subsets $I$ and $J$ of $K$,
\end{itemize}
such that
\[
\mu(G(A \times B)) = m_K(I^{-1}J)
\]
and 
\[
A \cap X_3'' \subset \pi^{-1}(I) \qand B \cap X_3'' \subset \pi^{-1}(J).
\]
It follows from the proof of \cite[Theorem 5.1]{BF} that if $A = B$, then we can take $I = J$. Since the set $X_3''$ is $G$-invariant, 
we see that 
\[
A_x \subset \pi^{-1}(I)_x = \tau^{-1}(I\pi(x)^{-1}), \quad \textrm{for all $x \in X_3''$},
\]
whence
\[
A_x^{-1}(B \cap X_3'') \subset A_x^{-1}\pi^{-1}(J) = \pi^{-1}(\tau(A_x)^{-1}J) \subset \pi^{-1}(\pi(x) I^{-1}J).
\]
Let $X_3 := X_3' \cap X_3''$ and note that $X_3$ is $G$-invariant and $\mu$-conull. Thus the proof is finished modulo our assertion about total ergodicity. Suppose that $K$ is not connected. Then there is an open subgroup $U$ of $K$, and $G_o = \tau^{-1}(U)$ is a finite-index 
subgroup of $G$. We note that $C := \pi^{-1}(U)$ is a $G_o$-invariant measurable subset of $X$, with positive $\mu$-measure, but
which cannot be $\mu$-conull, since it does not map onto $K$ under $\pi$ (modulo $\mu$-null sets). We conclude that $G \acts (X,\mu)$
is not totally ergodic.
\end{proof}

\subsection{Putting it all together}

Let $K$ and $Q$ be compact groups with Haar probability measures $m_K$ and $m_Q$ respectively and 
suppose that there is a continuous homomorphism $p$ from $K$ onto $Q$. 

\begin{definition}[Pair reduction]
\label{def_reduction}
Let $(I,J)$ and $(I_o,J_o)$ be two pairs of measurable subsets of $K$ and $Q$ respectively. We say 
that \emph{$(I,J)$ reduces to $(I_o,J_o)$ with respect to $p$} if
\[
I \subset p^{-1}(I_o) \qand J \subset p^{-1}(J_o) \qand m_K(J I^{-1}) = m_Q(J_o I_o^{-1}).
\]
\end{definition}

This notion is quite useful when we now summarize our discussion above. 

\begin{proposition}[A correspondence principle for transfer times]
\label{mainprop}
Let $A$ and $B$ be measurable subsets of $X$ with positive $\mu$-measures. Then there exist 
\vspace{0.1cm}
\begin{itemize}
\item a compact and metrizable abelian group $K$ with Haar probability measure $m_K$,
\item a homomorphism $\tau : G \ra K$ with dense image,
\item a pair $(I,J)$ of measurable subsets of $K$,
\end{itemize} 
\vspace{0.1cm}
which satisfy
\[
\mu(A) \leq m_K(I) \qand \mu(B) \leq m_K(B) \qand m_K(JI^{-1}) \leq \underline{d}(\cR_{A,B}).
\]
Furthermore, suppose that $Q$ is a compact group and $p : K \ra Q$ is a continuous surjective homomorphism. 
If $(I_o,J_o)$ is a pair of measurable subsets of $Q$ such that $(I,J)$ reduces to $(I_o,J_o)$ with respect to $p$,
then
\[
\cR_{A,B} \subseteq \tau_p^{-1}(J_o I_o^{-1}),
\]
where $\tau_p = p \circ \tau$, and for all $g \in \tau_p^{-1}(J_oI_o^{-1}) \setminus \cR_{A,B}$, we have
\[
\mu(A) + \mu(B) \leq m_Q(I_o) + m_Q(J_o) - m_Q(I_o \cap \tau_p(g)^{-1}J_o),
\]
Moreover, there exists a $G$-factor map $\sigma : (X,\mu) \ra (Q,m_Q)$, where $G$ acts on $Q$ via $\tau_p$, such that
\[
A \subseteq \sigma^{-1}(I_o) \qand B \subseteq \sigma^{-1}(J_o), \quad \textrm{modulo $\mu$-null sets}.
\]
In the case when $A = B$, we can take $I = J$.
\end{proposition}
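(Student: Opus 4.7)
The plan is to feed $A$ and $B$ through the correspondence Lemma \ref{corrprinciple}, combine the resulting identity with the density upper bound from Lemma \ref{lemma_generic}, and then push everything from $K$ down to $Q$ through the quotient map $p$.

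I would first apply Lemma \ref{corrprinciple} to obtain the compact metrizable abelian group $K$, the homomorphism $\tau\colon G\to K$ with dense image, a $G$-invariant $\mu$-conull subset $X_3\subseteq X$, a $G$-equivariant map $\pi\colon X_3\to K$ with $\pi_*(\mu|_{X_3})=m_K$, and measurable subsets $I,J\subseteq K$ such that $A\cap X_3\subseteq\pi^{-1}(I)$, $B\cap X_3\subseteq\pi^{-1}(J)$, and $\mu(A_x^{-1}B)=m_K(I^{-1}J)$ for every $x\in X_3$ (with $I=J$ when $A=B$). Since $\pi$ pushes $\mu$ to $m_K$, the inclusions yield $\mu(A)\le m_K(I)$ and $\mu(B)\le m_K(J)$. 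At a common generic point $x\in X_3\cap X_2$, where $X_2$ is the conull set from Lemma \ref{lemma_generic}, the identity $m_K(JI^{-1})=m_K(I^{-1}J)=\mu(A_x^{-1}B)\le\underline{d}(\cR_{A,B})$ holds, which produces the first batch of inequalities (commutativity of $K$ being crucial).

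Next, to establish the inclusion $\cR_{A,B}\subseteq\tau_p^{-1}(J_oI_o^{-1})$, I would pick any $x\in X_3\cap X_1$, where $X_1$ is the conull set from Lemma \ref{lemma_passtoreturns}; then $\cR_{A,B}=B_xA_x^{-1}$. The inclusions $A\cap X_3\subseteq\pi^{-1}(I)$ and $B\cap X_3\subseteq\pi^{-1}(J)$, combined with the $G$-invariance of $X_3$ and the identity $(\pi^{-1}(E))_x=\tau^{-1}(E\pi(x)^{-1})$, give $A_x\subseteq\tau^{-1}(I\pi(x)^{-1})$ and $B_x\subseteq\tau^{-1}(J\pi(x)^{-1})$. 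Commutativity of $K$ collapses the $\pi(x)$-factors, so $B_xA_x^{-1}\subseteq\tau^{-1}(JI^{-1})$, and the reduction hypothesis $I\subseteq p^{-1}(I_o)$, $J\subseteq p^{-1}(J_o)$ yields $JI^{-1}\subseteq p^{-1}(J_oI_o^{-1})$, proving the inclusion. Setting $\sigma=p\circ\pi$ automatically produces a $G$-factor onto $(Q,m_Q)$, and $A\subseteq\sigma^{-1}(I_o)$, $B\subseteq\sigma^{-1}(J_o)$ modulo null sets follow by composing inclusions.

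The main step is the overshoot relation. Fix $g\in\tau_p^{-1}(J_oI_o^{-1})\setminus\cR_{A,B}$. Since $G$ is countable, only countably many such $g$ arise, and for each $g$ I would enlarge $X_2$ by intersecting with the $\mu$-conull generic sets for the indicator of $\pi^{-1}(I\cup\tau(g)^{-1}J)$; the result is still a $\mu$-conull subset $X^\star$ of $X_3\cap X_2$. For $x\in X^\star$, Lemma \ref{lemma_generic} gives $\underline{d}(A_x\cup g^{-1}B_x)=\mu(A)+\mu(B)$ because $A_x\cap g^{-1}B_x=\emptyset$. On the other hand, $A_x\cup g^{-1}B_x\subseteq\tau^{-1}\bigl((I\cup\tau(g)^{-1}J)\pi(x)^{-1}\bigr)$, whose lower density equals $m_K(I\cup\tau(g)^{-1}J)$ by genericity, so monotonicity of $\underline{d}$ yields $\mu(A)+\mu(B)\le m_K(I\cup\tau(g)^{-1}J)$. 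Finally, $I\cup\tau(g)^{-1}J\subseteq p^{-1}(I_o\cup\tau_p(g)^{-1}J_o)$ combined with $p_*m_K=m_Q$ gives $m_K(I\cup\tau(g)^{-1}J)\le m_Q(I_o\cup\tau_p(g)^{-1}J_o)$, and inclusion-exclusion in $Q$ delivers the claimed overshoot bound.

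The main obstacle is the bookkeeping of measurability and genericity: one must verify that the countable intersection of conull generic sets (one per $g$, plus those supplied by Lemmas \ref{lemma_passtoreturns}, \ref{lemma_generic}, and \ref{corrprinciple}) remains $\mu$-conull, so that a \emph{single} point $x$ simultaneously realizes every density identity needed in the argument. The only other subtlety is measurability of the difference and union sets in $K$ and $Q$, which is handled by the completion of the Borel $\sigma$-algebra under the Haar measures, as already noted in the remark following the statement.
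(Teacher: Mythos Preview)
Your proposal is correct and follows the same overall architecture as the paper's proof: invoke Lemma~\ref{corrprinciple} for $(K,\tau,\pi,I,J)$, combine with Lemmas~\ref{lemma_passtoreturns} and~\ref{lemma_generic} at a common conull set, and then descend to $Q$ via $\sigma=p\circ\pi$.

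The one place where you take a small detour is the overshoot inequality. You route the estimate through $K$ by enlarging the generic set to include genericity for each $\pi^{-1}(I\cup\tau(g)^{-1}J)$, then use monotonicity of $\underline{d}$ and finally push down to $Q$. The paper avoids this extra layer entirely: once Lemma~\ref{lemma_generic} gives $\underline{d}(A_x\cup g^{-1}B_x)=\mu(A)+\mu(B)=\mu(A\cup g^{-1}B)$ for $g\notin\cR_{A,B}$, one works directly with the \emph{measure} $\mu$ in $X$. The inclusion $A\cup g^{-1}B\subseteq\sigma^{-1}(I_o)\cup g^{-1}\sigma^{-1}(J_o)=\sigma^{-1}(I_o\cup\tau_p(g)^{-1}J_o)$ (modulo null sets) and $\sigma_*\mu=m_Q$ immediately give
\[
\mu(A)+\mu(B)\le m_Q(I_o\cup\tau_p(g)^{-1}J_o)=m_Q(I_o)+m_Q(J_o)-m_Q(I_o\cap\tau_p(g)^{-1}J_o).
\]
This sidesteps the additional countable family of generic sets and the passage through $m_K$; no new genericity beyond what Lemmas~\ref{lemma_passtoreturns}--\ref{corrprinciple} already supply is needed. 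Your version works, but the paper's is shorter and removes the bookkeeping you flagged as the main obstacle.
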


\begin{proof}
By Lemma \ref{corrprinciple}, we can find a $G$-invariant measurable $\mu$-conull subset $X_3 \subseteq X$, a compact and 
metrizable abelian group $K$ with Haar probability measure $m_K$, a homomorphism $\tau : G \ra K$ with dense image,
and two measurable subsets $I$ and $J$ of $K$, a $G$-equivariant measurable map $\pi : X_3 \ra K$ such that 
$\pi_*(\mu |_{X_3}) = m_K$, where $G$ acts on $K$ via $\tau$, such that 
\begin{equation}
\label{incl1}
A \cap X_3 \subseteq \pi^{-1}(I) \qand B \cap X_3 \subseteq \pi^{-1}(J)
\end{equation}
and
\[
m_K(JI^{-1}) \leq \mu(A_x^{-1}B) 
\qand
A_x^{-1}(B \cap X_3) \subseteq \pi^{-1}(\pi(x) I^{-1}J), 
\]
for all $x \in X_3$. Furthermore, by Lemma \ref{lemma_passtoreturns} and Lemma \ref{lemma_generic}, there exist measurable 
$\mu$-conull subsets $X_1$ and $X_2$ of $X$
such that 
\[
\cR_{A,B} = B_x A_x^{-1} \qand \mu(A_x^{-1}B) \leq \underline{d}(\cR_{A,B})
\]
and, for every $g \notin \cR_{A,B}$,
\begin{equation}
\label{recall_overshoot}
\underline{d}(A_x \cup g^{-1}B_x) = \mu(A) + \mu(B) = \mu(A \cup g^{-1}B),
\end{equation}
for all $x \in X_1 \cap X_2$. In particular, since $X_1 \cap X_2 \cap X_3$ is a $\mu$-conull subset of $X$, and thus non-empty, we have
\[
\mu(A) \leq m_K(I) \qand \mu(B) \leq m_K(J) \qand m_K(JI^{-1}) \leq \underline{d}(\cR_{A,B}).
\]
Let us now assume that $Q$ is a compact group, $p : K \ra Q$ is a continuous surjective homomorphism and $I_o$ and $J_o$ are 
measurable subsets of $Q$ such that $(I,J)$ reduces to $(I_o,J_o)$. We recall that this means that 
\[
I \subset p^{-1}(I_o) \qand J \subset p^{-1}(J_o) \qand m_K(JI^{-1}) = m_Q(J_o I_o^{-1}).
\]
Hence, $J I^{-1} \subset p^{-1}(J_o I_o^{-1})$, and 
\[
m_Q(J_oI_o^{-1}) \leq \mu(A_x^{-1}B) 
\qand
A_x^{-1}(B \cap X_3) \subseteq \pi^{-1}(\pi(x) p^{-1}(J_o I_o^{-1})).
\]
We note that we can write
\[
\pi^{-1}(\pi(x) p^{-1}(J_o I_o^{-1})) = \sigma^{-1}(\sigma(x) J_o I_o^{-1}), 
\]
for all $x \in X_3$, where $\sigma = p \circ \pi$, and thus
\begin{equation}
A_x^{-1}(B \cap X_3) \subseteq \sigma^{-1}(\sigma(x) J_o I_o^{-1}), \quad \textrm{for all $x \in X_3$}.
\end{equation} 
The map $\sigma$ is a  $G$-factor map from $(X,\mu)$ to $(Q,m_Q)$, where $G$ acts on 
$Q$ via $\tau_p = p \circ \tau$, and it follows from \eqref{incl1} that
\begin{equation}
\label{ABincl}
A \cap X_3 \subset \sigma^{-1}(I_o) \qand B \cap X_3 \subset \sigma^{-1}(J_o).
\end{equation}
It remains to prove that
\[
\cR_{A,B} \subseteq \tau_p^{-1}(J_o I_o^{-1}),
\]
and that for every $g \in \tau_p^{-1}(J_o I_o^{-1}) \setminus \cR_{A,B}$, we have
\begin{equation}
\label{overshoot}
\mu(A) + \mu(B) \leq m_Q(I_o) + m_Q(J_o) - m_Q(I_o \cap \tau_p(g)^{-1}J_o).
\end{equation}
To prove the inclusion, we first note that since $X_3$ is $G$-invariant, we have
\begin{eqnarray*}
(A_x^{-1}(B \cap X_3))_x
&=& 
A_x^{-1} B_x \subset \sigma^{-1}(\sigma(x)J_o I_o^{-1})_x \\
&=& \tau_p^{-1}(\sigma(x)J_o I_o^{-1}\sigma(x)^{-1}) = \tau_p^{-1}(J_o I_o^{-1}),
\end{eqnarray*}
for all $x \in X_3$. To prove \eqref{overshoot}, we recall from \eqref{recall_overshoot} that if $g \notin \cR_{A,B}$, then
\[
\underline{d}(A_x \cup g^{-1}B_x)
= \mu(A) + \mu(B) = \mu(A \cup g^{-1}B),
\]
whence, by \eqref{ABincl},
\begin{eqnarray*}
\underline{d}(A_x \cup g^{-1}B_x)
&=& \mu(A) + \mu(B) = \mu(A \cup g^{-1}B) \\
&\leq & 
\mu(\sigma^{-1}(I_o) \cup g^{-1} \sigma^{-1}(J_o)) 
= 
m_Q(I_o \cup \tau_p(g)^{-1}J_o) \\
&=& 
m_Q(I_o) + m_Q(J_o) - m_Q(I_o \cap \tau_p(g)^{-1}J_o),
\end{eqnarray*}
which finishes the proof.
\end{proof}

\subsection{Classical product set theorems in compact groups}

We shall use the following two results about product sets in compact groups due to Kneser in his very influential paper \cite{Kneser}.

\begin{theorem}{\cite[Satz 1]{Kneser}}
\label{thm_Kneser1}
Let $K$ be a compact and metrizable abelian group with Haar probability measure $m_K$ and suppose that $I$ and $J$ are measurable subsets of $K$
with positive $m_K$-measures such that
\[
m_K(JI^{-1}) < m_K(I) + m_K(J).
\]
Then $JI^{-1}$ is a clopen subset of $K$, and there exist 
\vspace{0.1cm}
\begin{itemize}
\item a finite group $Q$ and a homomorphism $p$ from $K$ onto $Q$.
\item a pair $(I_o,J_o)$ of subsets of $Q$ with 
\[
m_Q(J_o I_o^{-1}) = m_Q(J_o) + m_Q(I_o) - m_Q(\{e_Q\}),
\]
\end{itemize}  
\vspace{0.1cm}
such that $(I,J)$ reduces to $(I_o,J_o)$ with respect to $p$. If $I = J$, we can take $I_o = J_o$.
\end{theorem}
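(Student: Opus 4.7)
Let $S := JI^{-1}$ and let
\[
H := \{k \in K : kS = S\}
\]
be its set-theoretic stabiliser, which is automatically a closed subgroup of $K$. The plan is to show that $H$ is \emph{open}, after which all remaining structural conclusions follow almost mechanically. Indeed, compactness of $K$ then forces $Q := K/H$ to be finite; writing $p : K \to Q$ for the quotient map and $I_o := p(I)$, $J_o := p(J)$, the $H$-invariance of $S$ together with surjectivity of $p$ yield $S = p^{-1}(p(S)) = p^{-1}(J_o I_o^{-1})$. This identity shows simultaneously that $S$ is clopen (preimage of an arbitrary subset of a discrete finite group under a continuous map), that $m_K(S) = m_Q(J_o I_o^{-1})$, and --- together with the trivial inclusions $I \subseteq p^{-1}(I_o)$ and $J \subseteq p^{-1}(J_o)$ --- that $(I,J)$ reduces to $(I_o,J_o)$ with respect to $p$. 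In the symmetric case $I = J$ one has $I_o = J_o$ automatically.

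The hard part is establishing the openness of $H$, and I would approach this via the classical Kneser stabiliser argument recast in the measure-theoretic setting. The starting observation is that the convolution $\mathbf{1}_J * \mathbf{1}_{I^{-1}}$ has integral $m_K(I)\, m_K(J)$ and is supported on $S$, so the hypothesis $m_K(S) < m_K(I) + m_K(J)$ forces a strictly positive excess overlap $m_K(J \cap aI)$ on a positive-measure set of $a \in S$. This excess is then exploited via a Dyson-type $e$-transform: for a suitably chosen $a$, one replaces $(I,J)$ by a transformed pair $(I', J')$ that preserves the total measure $m_K(I)+m_K(J)$ and keeps the ambient difference set $J'(I')^{-1}$ inside $S$, while exhibiting an enlarged translational symmetry. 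Iterating this procedure and using the strictly positive deficit $m_K(I) + m_K(J) - m_K(S)$ together with compactness of $K$, the process must terminate at a non-trivial open subgroup of symmetries of $S$, which is necessarily contained in $H$; as a backup route if the measure-theoretic iteration proves awkward, one can instead approximate $K$ by its compact abelian Lie quotients and reduce to the classical finite-group Kneser theorem via the connected component.

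To finish, one extracts the measure identity inside $Q$. The stabiliser of $J_o I_o^{-1}$ in $Q$ is forced to be trivial by maximality of $H$: any coset $qH$ stabilising $J_o I_o^{-1}$ would, via $S = p^{-1}(J_o I_o^{-1})$, stabilise $S$ and hence lie in $H$. A short induction on $|Q|$, repeating the stabiliser mechanism one level down inside the finite abelian group $Q$, delivers the Cauchy--Davenport-type bound
\[
|J_o I_o^{-1}| \geq |I_o| + |J_o| - 1
\]
in this stabiliser-free regime. Combining this with the strict upper bound $m_Q(J_o I_o^{-1}) = m_K(S) < m_K(I) + m_K(J) \leq m_Q(I_o) + m_Q(J_o)$ and multiplying through by $|Q|$ pins $|J_o I_o^{-1}|$ down to exactly $|I_o| + |J_o| - 1$, which is precisely the required equality $m_Q(J_o I_o^{-1}) = m_Q(I_o) + m_Q(J_o) - m_Q(\{e_Q\})$.
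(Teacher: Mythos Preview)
The paper does not prove this theorem at all: it is quoted verbatim as \cite[Satz~1]{Kneser} and used as a black box in Section~3. So there is no ``paper's own proof'' to compare against.

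As for your proposal on its own terms: the overall architecture is correct and matches the classical Kneser argument. The reduction you give --- once $H$ is open, $Q=K/H$ is finite, $S=p^{-1}(J_oI_o^{-1})$ is clopen, and $(I,J)$ reduces to $(I_o,J_o)$ --- is clean, and your endgame in $Q$ (triviality of the stabiliser of $J_oI_o^{-1}$ by maximality of $H$, then the finite Kneser/Cauchy--Davenport lower bound $|J_oI_o^{-1}|\ge |I_o|+|J_o|-1$, squeezed against the strict upper bound) correctly pins down the equality $m_Q(J_oI_o^{-1})=m_Q(I_o)+m_Q(J_o)-m_Q(\{e_Q\})$.

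The one place that remains a genuine sketch rather than a proof is exactly the part you flag as ``the hard part'': showing $H$ is open. Your description of the $e$-transform iteration (``iterating this procedure \ldots the process must terminate at a non-trivial open subgroup'') is the right idea but is not yet an argument; in the compact (as opposed to finite) setting one needs a quantitative increment at each step and a compactness/limiting argument to conclude, and neither is spelled out. Your backup route via approximation by Lie quotients and the finite Kneser theorem is also standard but equally nontrivial to execute. Since the paper simply cites Kneser here, this level of sketch is arguably appropriate, but you should be aware that this is where all the work lives.
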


\begin{corollary}
\label{cor_kneser1}
Let $K$ be a compact and metrizable abelian group with Haar probability measure $m_K$ and assume that $I$ is a measurable
subset of $K$ with positive $m_K$-measure such that 
\[
m_K(II^{-1}) < \frac{3}{2} m_K(I).
\]
Then there exist a finite group $Q$, a surjective homomorphism $p : K \ra Q$ and a point $q \in Q$ such that $(I,I)$ reduces
to $(\{q\},\{q\})$ with respect to $p$. In particular, $II^{-1}$ is an open subgroup of $K$.
\end{corollary}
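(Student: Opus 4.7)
The plan is to invoke Theorem~\ref{thm_Kneser1} (Kneser's inequality) with $J = I$ and then pin down how small $I_o$ must be in the resulting finite quotient $Q$. Since $m_K(II^{-1}) < \tfrac{3}{2}m_K(I) < 2m_K(I)$, the hypothesis of Kneser's theorem is satisfied, so we obtain a finite group $Q$, a surjective homomorphism $p : K \to Q$, and a subset $I_o \subset Q$ (we can take $I_o = J_o$ because $I = J$) such that $(I,I)$ reduces to $(I_o,I_o)$ with respect to $p$, with the exact identity
\[
m_K(II^{-1}) \;=\; m_Q(I_o I_o^{-1}) \;=\; 2m_Q(I_o) - \tfrac{1}{|Q|}.
\]

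Next, I would combine this with the inclusion $I \subset p^{-1}(I_o)$, which forces $m_K(I) \leq m_Q(I_o)$. Substituting into the hypothesis gives
\[
2m_Q(I_o) - \tfrac{1}{|Q|} \;=\; m_K(II^{-1}) \;<\; \tfrac{3}{2}m_K(I) \;\leq\; \tfrac{3}{2}m_Q(I_o),
\]
so $\tfrac{1}{2}m_Q(I_o) < \tfrac{1}{|Q|}$, i.e.\ $m_Q(I_o) < \tfrac{2}{|Q|}$. Since $m_Q$ is normalized counting measure, this forces $|I_o| < 2$, and as $m_Q(I_o) > 0$ we must have $I_o = \{q\}$ for some $q \in Q$. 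This gives the desired reduction of $(I,I)$ to $(\{q\},\{q\})$.

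For the final assertion, note that $I_o I_o^{-1} = \{e_Q\}$, so the reduction identity becomes $m_K(II^{-1}) = 1/|Q| = m_K(\ker p)$. Moreover $II^{-1} \subset p^{-1}(\{q\})\cdot p^{-1}(\{q\})^{-1} = \ker p$. The two clopen sets $II^{-1}$ (clopen by Theorem~\ref{thm_Kneser1}) and $\ker p$ (clopen since $p$ is continuous onto a finite group) then coincide: their symmetric difference is open and has $m_K$-measure zero, hence is empty because Haar measure has full support. Thus $II^{-1} = \ker p$ is an open subgroup of $K$.

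The main step is the pigeonhole computation that $|I_o| = 1$; there is no serious obstacle since all the work is packaged in Kneser's theorem. One minor care point is to justify that two clopen sets agreeing modulo a null set must be equal, which relies on the full-support property of Haar measure on the compact group $K$.
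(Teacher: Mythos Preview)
Your proof is correct and follows essentially the same route as the paper: apply Theorem~\ref{thm_Kneser1} with $J=I$, then use the inequality $2m_Q(I_o) - m_Q(\{e_Q\}) < \tfrac{3}{2}m_Q(I_o)$ to force $|I_o|=1$. Your justification that $II^{-1} = \ker p$ (rather than merely equality modulo null sets) via the full-support property of Haar measure is spelled out more carefully than in the paper, which simply asserts $II^{-1} = p^{-1}(I_oI_o^{-1})$ as part of the reduction.
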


\begin{proof}
By Theorem \ref{thm_Kneser1}, there exist a finite group $Q$, a homomorphism $p$ from $K$ onto $Q$ and
a subset $I_o$ of $Q$, such that 
\[
I \subset p^{-1}(I_o) \qand II^{-1} = p^{-1}(I_o I_o^{-1}) \qand m_Q(I_o I_o^{-1}) = 2m_Q(\tilde{I}) - m_Q(\{e_Q\}).
\]
Since $m_K(II^{-1}) < \frac{3}{2} m_K(I)$, we conclude that
\begin{equation}
\label{Qbnd}
m_Q(I_o I_o^{-1}) = 2m_Q(I_o) - m_Q(\{e_Q\}) < \frac{3}{2} m_Q(I_o),
\end{equation}
whence $m_Q(I_o I_o^{-1}) < \frac{3}{2} m_Q(\{e_Q\})$. Since $I_o$ is non-empty, we conclude that $I_o I_o^{-1}$ must be a
point. Hence $I_o = \{q\}$ for some $q \in Q$.
\end{proof}

If $K$ is connected and non-trivial, then there are no proper clopen subsets of $K$, whence the assumed upper bound in Theorem \ref{thm_Kneser1}
can never occur. 

\begin{corollary}
\label{cor_kneser2}
Let $K$ be a compact, metrizable and connected abelian group with Haar probability measure $m_K$. Then, for all measurable subsets 
$I$ and $J$ of $K$, 
\[
m_K(JI^{-1}) \geq \min\big(1,m_K(I) + m_K(J)\big).
\]
\end{corollary}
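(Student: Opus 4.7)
The plan is to deduce this corollary directly from Theorem \ref{thm_Kneser1} by a short contradiction argument, exploiting the topological obstruction that connectedness of $K$ places on the conclusion of Kneser's theorem.

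First, I would dispose of the degenerate cases. If $m_K(I) = 0$ or $m_K(J) = 0$, then the right-hand side equals $\min(1, m_K(I \cup J))$ and the inequality is either vacuous or follows from translating $J$ by a single element of $I$ (respectively $I$ by an element of $J$) to see that $JI^{-1}$ contains a translate of the larger set. So from here on I would assume both $I$ and $J$ have positive $m_K$-measure, and in particular both are nonempty, so $JI^{-1}$ contains some element $ji^{-1}$ and is nonempty.

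The main step: suppose for contradiction that
\[
m_K(JI^{-1}) < \min\bigl(1, m_K(I) + m_K(J)\bigr).
\]
Then in particular $m_K(JI^{-1}) < m_K(I) + m_K(J)$, so Theorem \ref{thm_Kneser1} applies and gives that $JI^{-1}$ is a clopen subset of $K$. On the other hand $m_K(JI^{-1}) < 1$ forces $JI^{-1} \neq K$, so $JI^{-1}$ is a proper clopen subset of $K$. Since we have already observed that $JI^{-1}$ is nonempty, this contradicts connectedness of $K$.

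There is really no hard step here; the only thing to be slightly careful about is measurability of $JI^{-1}$, but this was already addressed in the remark following Lemma \ref{corrprinciple}, and the value $m_K(JI^{-1})$ is unambiguously defined via the completion. Thus the entire argument collapses to invoking Theorem \ref{thm_Kneser1} once and reading off the topological contradiction.
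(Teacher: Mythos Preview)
Your argument is correct and is exactly the reasoning the paper intends: the sentence preceding the corollary (``If $K$ is connected and non-trivial, then there are no proper clopen subsets of $K$, whence the assumed upper bound in Theorem \ref{thm_Kneser1} can never occur'') is the entire proof, and you have simply spelled it out carefully, including the point that $m_K(JI^{-1}) < 1$ forces $JI^{-1} \neq K$. The only quibble is that the corollary (as stated in the paper) is vacuously false when $I = \emptyset$ and $m_K(J) > 0$, so your degenerate-case discussion should really just assume both sets are nonempty from the outset.
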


In the connected case, Kneser further characterized the pairs of measurable subsets of the group for which the lower bound in Corollary \eqref{cor_kneser2}
is attained. We denote by $\bT$ the group $\bR/\bZ$ endowed with the quotient topology.

\begin{theorem}{\cite[Satz 2]{Kneser}}
\label{thm_Kneser2}
Let $K$ be a compact, metrizable and connected abelian group with Haar probability measure $m_K$. If $I$ and $J$ are measurable
subsets of $K$ such that
\[
m_K(JI^{-1}) = m_K(I) + m_K(J) \leq 1,
\]
then there exist
\vspace{0.1cm}
\begin{itemize}
\item a continuous homomorphism $p$ from $K$ onto $\bT$,
\item closed intervals $I_o$ and $J_o$ in $\bT$ with 
\[
m_\bT(I_o) = m_K(I) \qand m_{\bT}(J_o) = m_{K}(J),
\]
\end{itemize} 
\vspace{0.1cm}
such that $(I,J)$ reduces to $(I_o,J_o)$ with respect to $p$.
\end{theorem}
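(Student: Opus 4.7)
The plan follows Kneser's original two-part approach: first reduce to the case where $JI^{-1}$ has trivial stabilizer via a quotient construction, and then show that in this reduced setting the group $K$ must be isomorphic to $\bT$ with $I, J$ being arcs.

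For the reduction, set $H = \Stab(JI^{-1}) = \{k \in K : k(JI^{-1}) = JI^{-1}\}$, a closed subgroup of $K$, and let $q : K \to K/H$ denote the quotient map. The group $K/H$ is again compact, connected, metrizable, and abelian, and $\bar S := q(JI^{-1}) = q(J)q(I)^{-1}$ has trivial stabilizer there. Pushforward gives $m_{K/H}(\bar S) = m_K(JI^{-1})$, and applying Corollary \ref{cor_kneser2} in $K/H$ together with the standing equality yields
\[
m_K(I) + m_K(J) = m_{K/H}(\bar S) \geq m_{K/H}(q(I)) + m_{K/H}(q(J)) \geq m_K(I) + m_K(J),
\]
so equality holds throughout. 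In particular $m_{K/H}(q(I)) = m_K(I)$ and $m_{K/H}(q(J)) = m_K(J)$, which combined with $I \subseteq q^{-1}(q(I))$ forces $I = q^{-1}(q(I))$ and $J = q^{-1}(q(J))$ modulo $m_K$-null sets. Replacing $(K, I, J)$ by $(K/H, q(I), q(J))$, we may assume $\Stab(JI^{-1})$ is trivial.

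Next I claim that in the reduced situation $K \cong \bT$. Indeed, the desired conclusion requires $I = p^{-1}(I_o)$ and $J = p^{-1}(J_o)$ modulo null sets (since $m_\bT(I_o) = m_K(I)$ while $I \subseteq p^{-1}(I_o)$), hence $\ker p \subseteq \Stab(I) \cap \Stab(JI^{-1})$; triviality of the latter forces $p$ to be an isomorphism. So one needs to prove that any compact, connected, metrizable abelian $K$ admitting trivial-stabilizer equality configurations is isomorphic to $\bT$. Writing $K$ as an inverse limit of finite-dimensional tori (using that $\widehat{K}$ is a countable torsion-free discrete abelian group) and passing to the limit, one reduces to the case $K = \bT^n$; the character-theoretic equality case of Kneser's inequality on $\bT^n$ then forces $I$ and $J$ to be essentially invariant under translations by a codimension-one subtorus, contradicting the trivial-stabilizer hypothesis unless $n = 1$.

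Once $K \cong \bT$, the conclusion is the classical one-dimensional equality case: lifting $I, J \subseteq \bT$ to bounded sets $\tilde I, \tilde J \subseteq \bR$ (the assumption $m_\bT(I) + m_\bT(J) < 1$ ensures one can choose lifts so that $\tilde J - \tilde I$ projects injectively onto $JI^{-1}$), one uses the elementary chain
\[
m(\tilde J - \tilde I) \geq (\max \tilde J - \min \tilde I) - (\min \tilde J - \max \tilde I) = (\max \tilde J - \min \tilde J) + (\max \tilde I - \min \tilde I) \geq m(\tilde J) + m(\tilde I),
\]
and notes that equality forces both $\tilde I$ and $\tilde J$ to be intervals. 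The main obstacle is the middle step: the characterization of the equality case of Kneser's inequality on $\bT^n$ is genuinely delicate, since it does not reduce directly to the Brunn--Minkowski equality case on $\bR^n$ and must be handled by a separate character-theoretic argument that tracks the wrap-around topology of the torus and accounts for sets that achieve equality only after a codimension-one reduction.
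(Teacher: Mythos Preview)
The paper does not give a proof of this statement at all: it is quoted verbatim as \cite[Satz 2]{Kneser} and used as a black box in the proof of Theorem~\ref{thm3}. So there is no ``paper's proof'' to compare your proposal against; what you have written is an attempted reconstruction of Kneser's original argument.

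Regarding the content of your sketch: the stabilizer reduction in your first step is correct and is indeed how Kneser begins. Your second step, however, is where the entire difficulty of the theorem lies, and you do not actually carry it out. Writing $K$ as an inverse limit of tori and ``passing to the limit'' does not by itself reduce to $\bT^n$, since equality for $(I,J)$ in $K$ need not descend to equality for their images in a finite-dimensional quotient; and your assertion that the equality case on $\bT^n$ forces invariance under a codimension-one subtorus is precisely Kneser's theorem in that special case, which you have restated rather than proved. You acknowledge this yourself in the final paragraph, so the proposal is really an outline with the main lemma left open.

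Your one-dimensional argument also contains an error: the inequality
\[
m(\tilde J - \tilde I) \geq (\max \tilde J - \min \tilde I) - (\min \tilde J - \max \tilde I)
\]
goes the wrong way, since the Lebesgue measure of a bounded set is at most, not at least, the length of its convex hull. The correct lower bound $m(\tilde J - \tilde I) \geq m(\tilde I) + m(\tilde J)$ comes instead from observing that $(\max \tilde I - \tilde J)$ and $(\tilde I - \min \tilde J)$ are essentially disjoint subsets of $\tilde I - \tilde J$; the equality case then needs a separate (short) argument. The lifting step also requires justification: you need $JI^{-1}$ to avoid some point of $\bT$, which follows from $m_\bT(JI^{-1}) < 1$, but one must then choose lifts of $I$ and $J$ compatibly so that $\tilde J - \tilde I$ really does project injectively.
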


\section{Proof of Theorem \ref{thm1} and Theorem \ref{thm2}}

Let $A$ and $B$ be measurable subsets of $X$ with positive $\mu$-measures. The first assertion of Theorem \ref{thm1} is contained
in Corollary \ref{cor_max}. Let us assume that either 
\begin{equation}
\label{case1}
A = B \qand \underline{d}(\cR_{A}) < \frac{3}{2} \mu(A)
\end{equation}
or
\begin{equation}
\label{case2}
\underline{d}(\cR_{A,B}) < \mu(A) + \mu(B).
\end{equation}
By the first part of Proposition \ref{mainprop}, there exist
\begin{itemize}
\item a compact and metrizable abelian group $K$ with Haar probability measure $m_K$,
\item a homomorphism $\tau : G \ra K$ with dense image,
\item a pair $(I,J)$ of measurable subsets of $K$,
\end{itemize} 
which satisfy
\[
\mu(A) \leq m_K(I) \qand \mu(B) \leq m_K(B) \qand m_K(JI^{-1}) \leq \underline{d}(\cR_{A,B}).
\]
In the case \eqref{case1}, which corresponds to Theorem \ref{thm1}, we can take $I = J$, and thus
\[
m_K(II^{-1}) \leq \underline{d}(\cR_{A}) < \frac{3}{2} \, m_K(I).
\]
and in the case \eqref{case2}, which corresponds to Theorem \ref{thm2}, we have
\[
m_K(JI^{-1}) \leq \underline{d}(\cR_{A,B}) < \mu(A) + \mu(B) \leq m_K(I) + m_K(J).
\]
In both cases, Theorem \ref{thm_Kneser1} tells us that there exist a finite group $Q$, a continuous surjective homomorphism 
$p : K \ra Q$ and a pair $(I_o,J_o)$ of subsets of $Q$ such that $(I,J)$ reduces to $(I_o,J_o)$ with respect to $p$.
By Proposition \ref{mainprop}, this implies that
\[
\cR_{A,B} \subset \tau_p^{-1}(J_o I_o^{-1}),
\]
and that for all $g \in \tau_p^{-1}(J_o I_o^{-1}) \setminus \cR_{A,B}$, 
\[
\mu(A) + \mu(B) \leq m_Q(I_o) + m_Q(J_o) - m_Q(I_o \cap \tau_p(g)^{-1}J_o).
\]
In the case \eqref{case1}, Corollary \ref{cor_kneser1} further asserts that $I_o = J_o = \{q\}$ for some point $q \in Q$, whence 
$I_o I_o^{-1} = e_Q$ and thus we can conclude from above that $\cR_{A} \subset G_o := \ker \tau_p$, and
\begin{equation}
\label{eQsize}
m_Q(\{e_Q\}) = m_K(I_o I_o^{-1}) \leq \underline{d}(\cR_{A,B}) < \frac{3}{2} \mu(A).
\end{equation}
Since $Q$ is finite, $G_o$ has finite index in $G$ and for every $g \in G_o \setminus \cR_A$, we have
\[
m_Q(I_o \cap \tau_p(g)^{-1}I_o) \geq m_Q(\{e_Q\}).
\]
Hence,
\[
2\mu(A) \leq 2m_Q(I_o) - m_Q(I_o \cap \tau_p(g)^{-1}I_o) \leq m_Q(\{e_Q\}).
\]
The last inequality clearly contradicts \eqref{eQsize}, so we conclude that $G_o = \cR_A$, which finishes the proof of Theorem \ref{thm1}. \\

In the case of \eqref{case2}, Theorem \ref{thm_Kneser1} asserts that the pair $(I_o,J_o)$ in $Q$ satisfies
\[
m_Q(J_o I_o^{-1}) = m_Q(I_o) + m_Q(J_o) - m_Q(\{e_Q\}),
\]
whence
\begin{equation}
\label{sizeQ2}
m_Q(I_o) + m_Q(J_o) - m_Q(\{e_Q\}) < \underline{d}(\cR_{A,B}) < \mu(A) + \mu(B).
\end{equation}
By Proposition \ref{mainprop}, this implies that
\[
\cR_{A,B} \subset \tau_p^{-1}(J_o I_o^{-1})
\]
and for all $g \in \tau_p^{-1}(J_o I_o^{-1}) \setminus \cR_{A,B}$, we have
\[
\mu(A) + \mu(B) \leq m_Q(I_o) + m_Q(J_o) - m_Q(I_o \cap \tau_p(g)^{-1}J_o).
\]
Since $g \in \tau_p^{-1}(J_o I_o^{-1})$ and $Q$ is finite, we have
\[
m_Q(I_o \cap \tau_p(g)^{-1}J_o) \geq m_Q(\{e_Q\}),
\]
whence 
\[
\mu(A) + \mu(B) \leq m_Q(I_o) + m_Q(J_o) - m_Q(\{e_Q\}),
\]
which clearly contradicts \eqref{sizeQ2}. We conclude that $\tau_p^{-1}(J_o I_o^{-1}) \setminus \cR_{A,B}$ is empty, and thus
\[
\cR_{A,B} = \tau_p^{-1}(J_o I_o^{-1}) = MG_o,
\] 
where $G_o = \ker \tau_p$, and $M$ is a finite subset of $G$ whose image under $\tau_p$ equals $J_o I_o^{-1}$. 
Since $Q$ is finite, $G_o$ has finite index in $G$. This proves Theorem \ref{thm2} (with $\eta = \tau_p$).

\section{Proof of Theorem \ref{thm3}}

Suppose that $G \acts (X,\mu)$ is totally ergodic. Let $A$ and $B$ be measurable subsets of $X$ with positive $\mu$-measures, 
and assume that 
\[
\underline{d}(\cR_{A,B}) = \mu(A) + \mu(B) < 1.
\]
By the first part of Proposition \ref{mainprop}, we can find
\begin{itemize}
\item a compact and metrizable abelian group $K$ with Haar probability measure $m_K$,
\item a homomorphism $\tau : G \ra K$ with dense image,
\item a pair $(I,J)$ of measurable subsets of $K$,
\end{itemize} 
which satisfy
\[
\mu(A) \leq m_K(I) \qand \mu(B) \leq m_K(B) \qand m_K(JI^{-1}) \leq \underline{d}(\cR_{A,B}).
\]
Furthermore, since $G \acts (X,\mu)$ is totally ergodic, $K$ must be connected. In particular, by
Corollary \ref{cor_kneser2}, 
\[
\min(1,m_K(I) + m_K(J)) \leq m_K(JI^{-1}) \leq \underline{d}(\cR_{A,B}) \leq \mu(A) + \mu(B) \leq m_K(I) + m_K(J).
\]
If $m_K(I) + m_K(J) \geq 1$, then $m_K(JI^{-1}) = 1$, whence $\mu(A) + \mu(B) \geq 1$, which we have assumed away.
Hence, $m_K(I) + m_K(J) < 1$, and thus $\mu(A) = m_K(I)$ and $\mu(B) = m_K(J)$, and
\[
m_K(JI^{-1}) = m_K(I) + m_K(J) < 1.
\] 
Theorem \ref{thm_Kneser2} now asserts 
that there is a continuous surjective homomorphism $p : K \ra \bT$ and closed intervals $I_o$ and $J_o$ of $\bT$
such that 
\[
\mu(A) = m_K(I) = m_\bT(I_o) \qand \mu(B) = m_K(J) = m_\bT(J_o),
\]
and $(I,J)$ reduces to the pair $(I_o,J_o)$ with respect to $p$. Hence, by the second part of Proposition \ref{mainprop},
\[
\cR_{A,B} \subseteq \tau_p^{-1}(J_o I_o^{-1})
\]
and for all $g \in \tau_p^{-1}(J_o I_o^{-1}) \setminus \cR_{A,B}$, we have
\[
\mu(A) + \mu(B) \leq m_\bT(I_o) + m_\bT(J_o) - m_\bT(I_o \cap \tau_p(g)^{-1}J_o).
\]
We conclude that
\[
m_\bT(I_o \cap \tau_p(g)^{-1}J_o) = 0, \quad \textrm{for all $g \in \tau_p^{-1}(J_o I_o^{-1}) \setminus \cR_{A,B}$}.
\]
Note that $J_o I_o^{-1}$ is a closed interval in $\bT$. Hence $m_\bT(I_o \cap \tau_p(g)^{-1}J_o) = 0$ for some 
$g \in \tau_p^{-1}(J_o I_o^{-1})$ if and only if $\tau_p(g)$ is one of the endpoints of this interval. In other words,
$\cR_{A,B}$ can only differ from the Sturmian set $\tau_p^{-1}(J_o I_o^{-1})$ by at most two cosets of the 
subgroup $\ker \tau_p$. 

Furthermore, since $\mu(A) = m_Q(I_o)$ and $\mu(B) = m_Q(J_o)$, the last part of Proposition \ref{mainprop} asserts
that there is a $G$-factor map $\sigma : (X,\mu) \ra (\bT,m_{\bT})$, where $G$ acts on $\bT$ via $\tau_p$, such that 
\[
A = \sigma^{-1}(I_o) \qand B = \sigma^{-1}(J_o),
\]
modulo $\mu$-null sets. This finishes the proof of Theorem \ref{thm3} (with $\eta = \tau_p$).

\section{Proof of Theorem \ref{thm4}}

Let us first assume that $G \acts (X,\mu)$ is $C$-doubling for some $C \geq 1$. Then, for every $n \geq 1$, there is 
a measurable subset $A_n \subset X$ such that 
\[
0 < \mu(A_n) < \frac{1}{n} \qand \underline{d}(\cR_{A_n}) \leq C\mu(A_n) < \frac{C}{n}.
\]
To avoid trivialities, we shall from now on assume that $n > C$. By Lemma \ref{corrprinciple}, we can find a (non-trivial) compact metrizable 
group $K_n$, a homomorphism $\eta_n : G \ra K_n$ with dense image, a $G$-factor map $\pi_n : (X,\mu) \ra (K_n,m_{K_n})$ and 
a measurable subset $I_n \subset K_n$ such that 
\[
A_n \subset \pi_n^{-1}(I_n) \quad \textrm{modulo null sets} \qand m_{K_n}(I_n^{-1} I_n) \leq \frac{C}{n}, \quad \textrm{for all $n \geq 1$}.
\]
In particular, $m_{K_n}(I_n) \leq \mu(A_n) < \frac{1}{n}$. Let $K$ denote the closure in $\prod_n K_n$ of the diagonally embedded 
subgroup $\{ (\eta_n(g)) \, \mid \, g \in G \big\}$. We note that $\pi = (\pi_n) : (X,\mu) \ra (K,m_K)$ is a $G$-factor map, where $G$
acts on $K$ via $\eta = (\eta_n)$. Since the pull-backs to $K$ of the sets $I_n$ provide measurable subsets of $K$ with arbitrarily small $m_K$-measures, we see that $K$ must be infinite. \\

Let us now assume that there exist
\begin{enumerate}
\item[(i)] an infinite compact metrizable group $K$ and a homomorphism $\eta : G \ra K$ with dense image. 
\item[(ii)] a $G$-factor $\sigma : (X,\mu) \ra (K,m_K)$, where $m_K$ denotes the normalized Haar measure on $K$
and $G$ acts on $K$ via $\eta$.
\end{enumerate}
We wish to prove that $(X,\mu)$ is $C$-doubling for some $C \geq 1$. Since $G \acts (K,m_K)$ is a $G$-factor of $(X,\mu)$, it is clearly 
enough to prove that $G \acts (K,m_K)$ is $C$-doubling. If $K^o$ has infinite index in $K$, then $K/K^o$ is an infinite totally disconnected group,
and thus we can find a decreasing sequence $(U_n)$ of open subgroups of $K$ with $m_K(U_n) = \frac{1}{[K : U_n]} < \frac{1}{n}$ for all $n$. 
Since $\cR_{U_n} = \eta^{-1}(U_n)$, we have
\[
\underline{d}(\cR_{U_n}) = \frac{1}{[K : U_n]} = m_K(U_n), \quad \textrm{for all $n \geq 1$},
\]
which shows that $G \acts (X,\mu)$ is $1$-doubling (we are using here that the sequence $(F_n)$ also satisfy \eqref{pointwise} for all bounded 
measurable functions on $K$).  If $K^o$ has finite index in $K$, then $K^o$ is an open subgroup, and thus has 
positive $m_K$-measure. Fix a  non-trivial continuous character $\chi : K^o \ra \bT$, and note that by connectedness, $\chi$ is onto. Set
\[
I_n = \chi^{-1}\Big(\Big[-\frac{1}{2n},\frac{1}{2n}\Big]\Big) \subset K^o \subset K, \quad \textrm{for $n \geq 1$}.
\]
Then, $m_K(I_n) = \frac{m_K(K^o)}{n}$ for all $n$, and it is not hard to show that 
\[
\underline{d}(\cR_{I_n}) = \underline{d}(\eta^{-1}(I_n - I_n)) \leq 2 m_K(I_n), \quad \textrm{for all $n$},
\]
whence $G \acts (X,\mu)$ is $2$-doubling.

\end{document}